\newtheorem{theorem}{Theorem}[section]
\newtheorem{lemma}{Lemma}[section]
\newtheorem{prop}[lemma]{Proposition}
\newtheorem{defi}[lemma]{Definition}
\theoremstyle{remark}
\newtheorem{rmk}[lemma]{Remark}
\newtheorem{ex}[lemma]{Example}
\numberwithin{equation}{section}
\newcommand{\R}{\mathbb R}
\newcommand{\N}{\mathbb N}
\begin{document}
\title[Regularizing Effects for $X$-elliptic equations]{Regularizing effect of the interplay between coefficients in linear and semilinear $X$-elliptic equations}

\author[P. Malanchini]{Paolo Malanchini}
\address[Paolo Malanchini]{Dipartimento di Matematica e Applicazioni\\ Universit\`a degli Studi di Milano - Bicocca, via Roberto Cozzi 55, 20125 - Milano, Italy}
\email{p.malanchini@campus.unimib.it}

\author[G. Molica Bisci]{Giovanni Molica Bisci}
\address[Giovanni Molica Bisci]{Department of Human Sciences and Promotion of Quality of Life, San Raffaele University, via di Val Cannuta 247, I-00166 Roma, Italy} 
\email{giovanni.molicabisci@uniroma5.it}

\author[S. Secchi]{Simone Secchi}
\address[Simone Secchi]{Dipartimento di Matematica e Applicazioni\\ Universit\`a degli Studi di Milano - Bicocca, via Roberto Cozzi 55, 20125 - Milano, Italy}
\email{simone.secchi@unimib.it}

\keywords{$X$-elliptic operators, degenerate elliptic equations, regularizing effect} \subjclass[2020]{35H20, 35J70, 35D30}

\begin{abstract}
We study the regularizing effect arising from the interaction between the coefficient \(a\) of the zero order term and the datum \(f\) in the problem
\begin{equation*}
\left\{ 
    \begin{alignedat}{2}
        -\mathcal{L}u + a(x) g(u) & = f(x) \quad &&\mbox{in} \;\; \Omega, \\ 
        u &= 0 \quad &&\mbox{on} \;\; \partial\Omega, 
    \end{alignedat} 
\right.     
\end{equation*}
where $\Omega\subseteq\R^N$ is a bounded domain and $\mathcal{L}$ is an $X$-elliptic operator introduced by Lanconelli and Kogoj in \cite{KL00}.
If $f \in L^1(\Omega)$, we prove that the \(Q\)-condition
introduced by Arcoya and Boccardo in \cite{AB15} is sufficient to ensure the existence and boundedness of solutions in the framework of $X$-elliptic operators as well. Finally, we prove the existence of a bounded solution for linear problems under a more general condition between $f$ and $a$.
\end{abstract}

\maketitle
 

\section{Introduction}

Let $\Omega$ be a bounded domain of $\mathbb{R}^N$, $N \geq 2$, and let $\mathcal{L}$ be the second order partial differential operator defined, for a smooth function $u\colon\R^N\to\R$, by
\begin{equation*}
    \mathcal{L} u\coloneqq \sum_{i,j = 1}^N \partial_i \left(b_{ij}(x) \partial_j u\right), \quad \partial_j = \frac{\partial}{\partial x_j},
\end{equation*}
where $b_{ij} = b_{ij}$ are measurable functions defined on $\R^N$.
We assume that the operator $\mathcal{L}$ is $X$-elliptic with respect to the family of vector fields $X = (X_1,\dots, X_m)$ in the sense of Definition \ref{def_X_elliptic}.

In the first part of this article, we consider the semilinear boundary value problem
\begin{equation} \tag{$P$}\label{problem}
\left\{ 
    \begin{alignedat}{2}
        -\mathcal{L}u + a(x) g(u) & = f(x) \quad &&\mbox{in} \;\; \Omega, \\ 
        u &= 0 \quad &&\mbox{on} \;\; \partial\Omega.
    \end{alignedat} 
\right. 
\end{equation}
With respect to the coefficient $a$ and to the datum $f$, we impose that 
\begin{equation}\tag{H}
\label{eq_cond_a}
a \in L^1(\Omega), \quad a \geq 0, \quad f \in L^1(\Omega).
\end{equation}
As a lower order term, we consider a function $g\colon\R\to\R$ such that
\begin{equation}\tag{H$_g$}
    \label{eq_cond_g}
    g~ \hbox{is continuous, odd and increasing}.
\end{equation}
\begin{rmk} \label{rmk:1.1}
    By \eqref{eq_cond_g}, the function $g$ has an inverse $g^{-1}$ defined in the open interval 
\begin{displaymath}
\left(-\lim_{s\to+\infty}g(s), \lim_{s\to+\infty}g(s)\right).
\end{displaymath}
\end{rmk}
The $Q$-condition of Arcoya and Boccardo was introduced in \cite{AB15} in the form
\begin{equation}
\label{eq_Q_cond}\tag{Q}
\text{there exists } Q \in \left(0,\lim_{s\to+\infty} g(s)\right) \text{ such that } |f(x)| \leq Q a(x) \text{ for a.e. } x \in \Omega
\end{equation}
in order to deal with the problem
\begin{equation} \label{problem_elliptic}
\left\{
\begin{alignedat}{2}
-\operatorname{div}\left( M(x)\nabla u \right)+ a(x) g(u) & = f(x) \quad &&\text{in}   \;\; \Omega, \\
u &= 0 \quad &&\text{on} \;\; \partial\Omega,
\end{alignedat}
\right.
\end{equation}
where $\Omega\subseteq\R^N$ is a bounded open set and $M$ is a bounded elliptic matrix.
As remarked in \cite{AB21}, the rather weak assumption $f \in L^1(\Omega)$
is neither sufficient to ensure the existence of a weak solution of \eqref{problem_elliptic}, nor can we ensure the existence of a solution with finite energy.

By introducing the ``$Q$-condition" \eqref{eq_Q_cond} on the right-hand side $f$, Arcoya and Boccardo were able to construct a unique weak solution $u \in H^1_0(\Omega) \cap L^\infty(\Omega)$ to \eqref{problem_elliptic}.

A few years later, the same authors extended in \cite{AB21} their previous result, giving an explicit $L^\infty$-bound of the solution of \eqref{problem_elliptic} and proving that a maximum principle holds when the inequality \eqref{eq_Q_cond} becomes an equality.

Motivated by \cite{AB15}, many authors have explored the regularizing effect of the $Q$-condition. See, for example, \cite{AB21b} where the results are applied to some Hamilton-Jacobi equations and \cite{ABO20} for data of the type $-\operatorname{div}(F(x))$ with vector-valued function $F(x)\in (L^2(\Omega))^N$.
We also refer to \cites{CAAM23, A23, ZFG24, ZZ17} for more recent contributions on this topic.

Finally, in \cite{ADLSV25} a nonlocal version of problem \eqref{problem_elliptic} is studied. Results analogous to those in \cite{AB15} are obtained, but in the setting of a nonlocal operator, which generalizes the fractional Laplace operator. Our aim is to construct a bounded weak solution of \eqref{problem} under the assumption \eqref{eq_Q_cond} in the framework of $X$-elliptic operators. 

\medskip
In the second part of the article, we consider the linear boundary problem
\begin{equation}\label{problem_linear}\tag{$\tilde{P}$}
\left\{ 
    \begin{alignedat}{2}
        -\mathcal{L}u + a(x) u & = f(x) \quad &&\mbox{in} \;\; \Omega, \\ 
        u &= 0 \quad &&\mbox{on} \;\; \partial\Omega, 
    \end{alignedat} 
\right. 
\end{equation}
with the same hypotheses \eqref{eq_cond_a}-\eqref{eq_cond_g}, but we replace condition \eqref{eq_Q_cond} with the weaker condition:
\begin{equation}
\label{eq_Q_cond_2} \tag{$\tilde{Q}$}
\text{there exist $Q\in  (0,+\infty)$ and $R \in L^r(\Omega)$ such that $|f(x)| \leq Q a(x) + R(x)$ for a.e.} x \in \Omega,
\end{equation}
for suitable values of $r\in (1,+\infty]$, see Section \ref{sec_linear}.

Clearly, assumption \eqref{eq_Q_cond_2} reduces to \eqref{eq_Q_cond} when $R \equiv 0$. This ``generalized $Q$-condition" was introduced in \cite{AB18}, where the authors proved the existence of a weak solution for 
\begin{equation*}
\left\{
\begin{alignedat}{2}
-\operatorname{div}\left( M(x)\nabla u \right)+ a(x) u & = f(x) \quad &&\text{in}   \;\; \Omega, \\
u &= 0 \quad &&\text{on} \;\; \partial\Omega,
\end{alignedat}
\right.    
\end{equation*}
under the condition \eqref{eq_Q_cond_2}, where $M$ is again a bounded elliptic matrix.

\medskip
We extend the results of \cites{AB15, AB18} to the broader framework of $X$-elliptic operators, introduced by Lanconelli and Kogoj in \cite{KL00}. Although these operators are not uniformly elliptic, the notion of $X$-ellipticity is sufficient to ensure the existence of a bounded solution of our problems. To the best of our knowledge, this is the first attempt to investigate the regularizing effect of the $Q$-condition within the setting of $X$-elliptic operators; a framework that includes a wide class of differential operators, see \cite[Section 2]{KS14} for some relevant examples.

\medskip
The paper is organized as follows. In Section~\ref{sec_defn} we introduce the $X$-elliptic operators along with the associated functional spaces. In Section~\ref{sec_semilinear}, we establish the existence and uniqueness of a bounded weak solution of \eqref{problem}, providing additionally an explicit bound for its $L^\infty$-norm.
In Section \ref{sec_linear}, we consider the linear problem \eqref{problem_linear} with the condition \eqref{eq_Q_cond_2}. Under suitable assumptions on the term $R(x)$ in \eqref{eq_Q_cond_2}, we can guarantee the existence and the boundedness of the (unique) solution.

\subsection{Notation}
For each~$q\in[1,+\infty]$, $L^q(\Omega)$ stands for the standard Lebesgue space, whose norm will be indicated with $\|\cdot\|_{L^q(\Omega)}$. For each $N$-dimensional Lebesgue measurable set $A\subseteq\R^N$, the symbol $|A|$ denotes the Lebesgue measure of $A$, and we write $\chi_A$ to indicate its characteristic function. 

We abbreviate $\{u>v\} = \{x\in\R^N: \, u(x)>v(x)\}$, and similarly for $\{u<v\}$, etc. To simplify notation we will often write $\int_{\{u>v\}}$ instead of $\int_{u>v}$ etc.  

For each~$k>0$ we will use the affine function
\begin{equation*}
G_k(s) \coloneqq 
\begin{cases}
    0, & \text{if } |s| \leq k, \\
    s - k, & \text{if } s > k, \\
    s + k, & \text{if } s < -k,
\end{cases}      
\end{equation*}
and the truncation function $T_k(s) \coloneqq s- G_k(s)$, that is
\begin{equation*}
T_k(s) =
\begin{cases}
    s \quad & \hbox{if $|s|< k$},\\
     \operatorname{sgn}(s) k \quad & \hbox{if $|s|\ge k$}.
\end{cases}
\end{equation*}

\section{$X$-elliptic operators and functional setting}
\label{sec_defn}
Lanconelli and Kogoj introduced in \cite{KL00} a new family of degenerate elliptic operators, whose degeneracy is controlled by a family $X$ of vector fields with suitable properties. 

More precisely, we consider the operator $\mathcal{L}$ defined on a smooth function $u\colon\R^N\to\R$ by
\begin{equation}
    \label{eq_def_operator}
    \mathcal{L} u\coloneqq \sum_{i,j = 1}^N \partial_i \left(b_{ij}(x) \partial_j u\right), \quad \partial_j = \frac{\partial}{\partial x_j},
\end{equation}
where $b_{ij}(x) = b_{ij}(x)$ are measurable functions in $\R^N$.

We consider a family $X = \{X_1,\dots, X_m\}$ of vector fields in $\R^N$, $X_j = (c_{j1}, \dots, c_{jN})$, $j=1,\dots m$, where the $c_{jk}$'s are locally Lipschitz continuous functions on $\R^N$. We identify the vector field $X_j$ with the first order differential operator
\begin{equation*}
X_j = \sum_{k=1}^N c_{jk} \partial_k.    
\end{equation*}
\begin{defi}
\label{def_X_elliptic}
We say that the operator $\mathcal{L}$ defined in \eqref{eq_def_operator} is \emph{uniformly $X$-elliptic} in an open subset $\Omega\subseteq\R^N$ if there exists a constant $\Lambda>0$ such that
\begin{equation}
    \label{eq_X_ell}
       \frac{1}{\Lambda} \sum_{j=1}^m \langle X_j(x),\xi\rangle^2 \le \sum_{i,j=1}^N b_{ij}(x)\xi_i\xi_j \le \Lambda \sum_{j=1}^m \langle X_j(x),\xi\rangle^2 
\end{equation}
for all $\xi=(\xi_1,\dots,\xi_N)\in\R^N$ and $x\in\Omega$, where
\begin{equation*}
\langle X_j(x),\xi\rangle \coloneqq \sum_{k=1}^N c_{jk}(x)\xi_k, \quad j=1,\dots, m.
\end{equation*} 
\end{defi}
We denote by $X u$ the \emph{$X$-gradient} of $u$, that is, $X u = (X_1 u, \dots, X_m u)$. Let $\Omega$ be a bounded open subset of $\mathbb{R}^N$. The functional $u \mapsto \|X u\|_{L^2(\Omega)}$ defines a norm on $C^1_0(\Omega)$, and we define the Hilbert space $\mathscr{H}_X(\Omega)$ as the completion of $C^1_0(\Omega)$ with respect to this norm. We set
\begin{equation*}
   \|u\|_X \coloneqq \|Xu\|_{L^2(\Omega)}, 
\end{equation*}
and the scalar product in $\mathscr{H}_X(\Omega)$ given by\footnote{Here and in the rest of the paper we will omit the dot in scalar products for finite-dimensional vector spaces. Hence $Xu Xv$ will stand for $Xu \cdot Xv$, and so on.}
\begin{equation*}
  \langle u, v\rangle_X\coloneqq \int_\Omega Xu Xv.  
\end{equation*}
In this article we assume, as in \cite[p. 409]{KS14}, a Sobolev-type embedding result:
\begin{enumerate}[label={\textbf{(S)}}]
    \item \label{sobolev}  There exists a number $2^*_X = 2^*_X(\Omega) > 2$ such that the embedding
\[
\mathscr{H}_X(\Omega) \hookrightarrow L^p(\Omega)
\]
is continuous for $p \in [1, 2^*_X]$, and compact for every $p \in [1, 2^*_X)$.
\end{enumerate}
We set
\begin{equation}
    \label{eq_dimension}
    N_X \coloneqq \frac{2 \cdot2^*_X}{2^*_X-2},
\end{equation}
so that $2^*_X = \frac{2N_X}{N_X-2}.$
\begin{ex}
    When the operator $\mathcal{L}$ is the classical Laplace operator $\Delta$, $2^*_X$ corresponds to the usual critical Sobolev exponent, $2^* = \frac{2N}{N-2}$ and $N_X = N$, the dimension of the euclidean space $\R^N$. 
\end{ex}
\begin{ex}
    A nontrivial example of $X$-elliptic operator is provided by the Baouendi-Grushin operator defined for $\gamma\ge 0$ as
    \begin{equation*}
          \Delta_x + |x|^{2\gamma}\Delta_y , \quad (x,y)\in\R^m\times\R^\ell.
    \end{equation*}
    Here $2^*_X = \frac{2N_\gamma}{N_\gamma-2}$, where $N_\gamma = m +(1+\gamma)\ell$, the \emph{homogeneous dimension} associated to the decomposition $\R^N = \R^m\times\R^\ell$.
\end{ex}
Assumption \ref{sobolev} allows us to define the best constant in the Sobolev embedding by
\begin{equation}
	\label{eq_sob_const}
	S \coloneqq \inf_{\substack{ u \in \mathscr{H}_X(\Omega) \\ u \neq 0}} \frac{\| u\|_X^2 }{ \| u \|_{L^{2^*_X}(\Omega)}^2}.
\end{equation}
The notion of $X$-elliptic operators was introduced in \cite{KL00}, where the authors established a Harnack inequality for solutions of $\mathcal{L}u = 0$. Gutiérrez and Lanconelli in \cite{GL03} proved a maximum principle for $X$-elliptic operators with lower-order terms. In the special case where the underlying vector fields $X$ are dilation invariant, they also obtained nonhomogeneous Harnack inequalities and Liouville-type theorems. A further refinement was provided in \cite{KL09}, where a one-sided Liouville-type property was proved.

The well-posedness and long-time behavior of solutions of equations involving $X$-elliptic operators were analyzed in \cite{KS14}. In the context of boundary regularity, the Wiener criterion for the Dirichlet problem was established in \cite{TU15}, while a nonhomogeneous Harnack inequality was further explored in \cite{U15}. More recently, the dynamics of stochastic parabolic equations governed by $X$-elliptic operators were studied in \cite{CLS19}. In \cite{GM23}, Wong–Zakai approximations for non-autonomous stochastic parabolic equations involving $X$-elliptic operators were investigated.
Finally, in the very recent \cites{P25,P25b} existence, uniqueness and regularity for nonlinear problems with $X$-elliptic operators have been treated.

\section{Semilinear problems}
\label{sec_semilinear}
To properly define a weak solution of \eqref{problem} observe that, since that matrix $B(x) \coloneqq (b_{ij}(x))$ is symmetric and positive semi-definite for all $x\in\Omega$, we have by \eqref{eq_X_ell}
\begin{align*}
   \left\lvert \int_\Omega B(x)\nabla u\nabla v \right\rvert &\le \int_\Omega | B(x)\nabla u \nabla u|^{1/2} \cdot| B(x)\nabla v \nabla v|^{1/2}\\& \le \Lambda\int_\Omega |X u| |X v| \le \Lambda \|Xu\|_{L^2(\Omega)} \|Xv\|_{L^2(\Omega)}
\end{align*}
for all $u, v\in C^1_0(\Omega)$. The bilinear form $(u,v)\mapsto \int_\Omega  B(x)\nabla u\nabla v$ can therefore be extended continuously to $\mathscr{H}_X(\Omega)\times \mathscr{H}_X(\Omega)$.

Moreover, condition \eqref{eq_X_ell} also yields the following ellipticity estimate
\begin{equation}
    \label{eq_elliptic}
    B(x)\nabla u \nabla u \ge \frac{1}{\Lambda} |Xu|^2 \quad \mbox{for all $u\in\mathscr{H}_X(\Omega)$ and $x\in\Omega$}.
\end{equation}
The next technical lemma will be useful.
\begin{lemma}
    \label{lemma_convergence}
    Let $(u_j)\subseteq \mathscr{H}_X(\Omega)$ be a sequence such that $u_j\rightharpoonup u$ in $\mathscr{H}_X(\Omega)$ as $j\to+\infty$ for some $u\in\mathscr{H}_X(\Omega)$. Then
    \begin{equation*}
         \int_\Omega B(x)\nabla u_j\nabla v\to \int_\Omega B(x)\nabla u\nabla v\quad\hbox{as $j\to+\infty$,}   
    \end{equation*}
    for all $v\in\mathscr{H}_X(\Omega)$.
\end{lemma}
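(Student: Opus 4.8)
The plan is to recognize that, once the bilinear form $(u,v)\mapsto\int_\Omega B(x)\nabla u\nabla v$ has been continuously extended to $\mathscr{H}_X(\Omega)\times\mathscr{H}_X(\Omega)$ as explained in the paragraph preceding the lemma, the assertion is essentially a restatement of the definition of weak convergence. Concretely, I would fix $v\in\mathscr{H}_X(\Omega)$ and introduce the map $\Phi_v\colon\mathscr{H}_X(\Omega)\to\R$ given by $\Phi_v(w)\coloneqq\int_\Omega B(x)\nabla w\,\nabla v$, where the integral is understood as the value of the extended bilinear form at $(w,v)$. This $\Phi_v$ is clearly linear in $w$, being the restriction of a bilinear form to its first slot.

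The next step is to observe that $\Phi_v$ is bounded. Indeed, the estimate displayed just above Lemma~\ref{lemma_convergence} gives, for $u,v\in C^1_0(\Omega)$, the bound $\lvert\int_\Omega B(x)\nabla u\,\nabla v\rvert\le\Lambda\|Xu\|_{L^2(\Omega)}\|Xv\|_{L^2(\Omega)}=\Lambda\|u\|_X\|v\|_X$, and this inequality survives the passage to the completion by density of $C^1_0(\Omega)$ in $\mathscr{H}_X(\Omega)$. Hence $\lvert\Phi_v(w)\rvert\le\Lambda\|v\|_X\,\|w\|_X$ for every $w\in\mathscr{H}_X(\Omega)$, so $\Phi_v\in\mathscr{H}_X(\Omega)^\ast$.

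Finally, since $u_j\rightharpoonup u$ in $\mathscr{H}_X(\Omega)$, the very definition of weak convergence yields $\Phi(u_j)\to\Phi(u)$ for every bounded linear functional $\Phi$ on $\mathscr{H}_X(\Omega)$; applying this to $\Phi=\Phi_v$ gives $\int_\Omega B(x)\nabla u_j\,\nabla v\to\int_\Omega B(x)\nabla u\,\nabla v$. As $v\in\mathscr{H}_X(\Omega)$ was arbitrary, the proof is complete.

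There is no genuine obstacle here: the only point deserving a word of care is that elements of $\mathscr{H}_X(\Omega)$ are a priori merely $\|\cdot\|_X$-limits of $C^1_0$ functions, so the symbol $\int_\Omega B(x)\nabla w\,\nabla v$ does not have an obvious pointwise meaning for such $w$. This is precisely what the continuous-extension argument recalled above the lemma takes care of, and $\Phi_v$ is by construction that extension evaluated in the first variable; everything else is the standard duality characterization of weak convergence.
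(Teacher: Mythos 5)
Your argument is correct: the continuity estimate $\bigl\lvert\int_\Omega B(x)\nabla w\nabla v\bigr\rvert\le\Lambda\|w\|_X\|v\|_X$, inherited by density from $C^1_0(\Omega)$, makes $\Phi_v$ a bounded linear functional on $\mathscr{H}_X(\Omega)$, and the conclusion is then literally the definition of weak convergence. This is, however, a genuinely different route from the one in the paper. The paper's proof instead uses the pointwise Cauchy--Schwarz inequality for the semidefinite form $B$ to bound
\begin{equation*}
\left\lvert \int_\Omega B(x)\nabla(u_j-u)\nabla v\right\rvert \le \Lambda \int_\Omega |X(u_j-u)|\,|Xv|,
\end{equation*}
and then asserts that the right-hand side tends to $0$. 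That assertion does not follow from weak convergence alone: $X(u_j-u)\rightharpoonup 0$ in $L^2(\Omega;\R^m)$ does not imply $\int_\Omega |X(u_j-u)|\,|Xv|\to 0$, since the absolute value destroys weak convergence to zero (think of oscillating sequences such as $\sin(jx)$, which converge weakly to $0$ while $|\sin(jx)|$ converges weakly to a positive constant). The paper's one-line computation would be valid under \emph{strong} convergence $u_j\to u$ in $\mathscr{H}_X(\Omega)$, but in the places where the lemma is invoked only weak convergence is available. So your duality argument is not just an acceptable alternative: it is the argument that actually proves the lemma as stated, and it costs nothing beyond the boundedness of the extended bilinear form, which the paper has already established.

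The only cosmetic remark is that weak convergence in the Hilbert space $\mathscr{H}_X(\Omega)$ is often phrased via the inner product $\langle\cdot,\cdot\rangle_X$; you implicitly use the equivalent dual-space formulation (every bounded linear functional passes to the limit), which is of course the same thing by the Riesz representation theorem. No gap there.
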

\begin{proof}
    The proof is straightforward. Using the previous estimate \eqref{eq_elliptic} we get
    \begin{align*}
            0 &= \lim_{j\to+\infty} \int_\Omega |X(u_j-u)| |Xv|\ge \frac{1}{\Lambda} \left\lvert \int_\Omega B(x) \nabla (u_j-u)\nabla v\right\rvert \\&= \frac{1}{\Lambda} \left\lvert \int_\Omega B(x) \nabla u_j\nabla v - \int_\Omega B(x) \nabla u\nabla v \right\rvert ,
    \end{align*}
    for all $v\in\mathscr{H}_X(\Omega)$.
\end{proof}
\begin{defi}
\label{def_weak_sol}
   We say that a function $u\in\mathscr{H}_X(\Omega)$ is a \emph{weak solution} of \eqref{problem} if $a\cdot g(u)\in L^1(\Omega)$ and
    \begin{equation*}
    \int_\Omega  B(x)\nabla u\nabla v + \int_\Omega a(x) g(u) v= \int_\Omega f(x) v
    \end{equation*}
for all $v\in \mathscr{H}_X(\Omega)\cap L^\infty(\Omega).$ 
\end{defi}

\begin{theorem}
\label{thm_existence_1}
Assume that \eqref{eq_cond_a}-\eqref{eq_cond_g} and \eqref{eq_Q_cond} are satisfied. There exists a unique weak solution $u\in\mathscr{H}_X(\Omega)\cap L^\infty(\Omega)$ of \eqref{problem}. Moreover, the following \textit{a priori} estimates holds:
    \begin{equation}
    \label{eq_bound_u}
        \|u\|_{L^\infty(\Omega)} \le g^{-1}(Q).
    \end{equation}
\end{theorem}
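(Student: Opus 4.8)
The plan is to adapt the approximation scheme of Arcoya--Boccardo \cite{AB15} to the $X$-elliptic setting, using assumption \ref{sobolev} in place of the Euclidean Sobolev inequality and Lemma \ref{lemma_convergence} for the weak continuity of the principal part. As a first step I would regularize the data: for $n\in\N$ set $a_n:=T_n(a)$, $g_n:=T_n\circ g$ and $f_n:=\operatorname{sgn}(f)\min\{|f|,Qa_n\}$, so that $a_n,g_n,f_n$ are bounded, $0\le a_n\le a$, $|f_n|\le\min\{|f|,Qa_n\}$, and (since $a<+\infty$ a.e.) $a_n\to a$ and $f_n\to f$ in $L^1(\Omega)$. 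Because $g_n$ is bounded, continuous and increasing, the operator $u\mapsto-\mathcal{L}u+a_ng_n(u)$ from $\mathscr{H}_X(\Omega)$ to its dual is bounded, hemicontinuous and monotone; it is coercive since $\int_\Omega a_ng_n(u)u\ge0$ (as $g$ odd and increasing forces $g(s)s\ge0$) together with the ellipticity bound \eqref{eq_elliptic}, while $f_n\in L^\infty(\Omega)$ defines a bounded functional on $\mathscr{H}_X(\Omega)$. Hence, by the Browder--Minty theorem, there is $u_n\in\mathscr{H}_X(\Omega)$ solving the weak formulation of \eqref{problem} with data $(a_n,g_n,f_n)$.

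The core of the proof, and the step I expect to be the main obstacle, is the uniform $L^\infty$ estimate. Set $k:=g^{-1}(Q)$, which is meaningful by Remark \ref{rmk:1.1} since $Q<\lim_{s\to+\infty}g(s)$, and take $n>Q$. I would test the approximate equation with $v:=T_m(G_k(u_n))\in\mathscr{H}_X(\Omega)\cap L^\infty(\Omega)$. By the chain rule $Xv=Xu_n\,\chi_{\{k<|u_n|<k+m\}}$, so \eqref{eq_elliptic} gives $\int_\Omega B(x)\nabla u_n\nabla v\ge\frac1\Lambda\|v\|_X^2\ge0$. The decisive observation is the piecewise sign analysis: on $\{u_n>k\}$ one has $g(u_n)>g(k)=Q$, hence $g_n(u_n)=\min\{g(u_n),n\}>Q$, while $v\ge0$ and $|f_n|\le Qa_n$, so $a_ng_n(u_n)v\ge Qa_nv\ge f_nv$; the situation on $\{u_n<-k\}$ is symmetric (using that $g$ is odd), and both sides vanish on $\{|u_n|\le k\}$. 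Therefore $\int_\Omega f_nv-\int_\Omega a_ng_n(u_n)v\le0$, which together with the equation forces $\|v\|_X=0$, i.e. $T_m(G_k(u_n))=0$; letting $m\to+\infty$ yields $\|u_n\|_{L^\infty(\Omega)}\le g^{-1}(Q)$ for every $n>Q$. In particular $|g(u_n)|\le Q<n$, so $g_n(u_n)=g(u_n)$ and $u_n$ solves the problem with the original $g$.

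Next I would obtain the energy bound: $u_n$ is now an admissible test function, and using $\int_\Omega a_ng(u_n)u_n\ge0$, \eqref{eq_elliptic} and $|f_n|\le|f|$ we get $\frac1\Lambda\|u_n\|_X^2\le g^{-1}(Q)\|f\|_{L^1(\Omega)}$, so $(u_n)$ is bounded in $\mathscr{H}_X(\Omega)$. Passing to a subsequence, $u_n\rightharpoonup u$ in $\mathscr{H}_X(\Omega)$ and, by the compactness in \ref{sobolev}, $u_n\to u$ in $L^2(\Omega)$ and a.e.; thus $\|u\|_{L^\infty(\Omega)}\le g^{-1}(Q)$ and $g(u_n)\to g(u)$ a.e. with $|g(u_n)|\le Q$. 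Fixing $v\in\mathscr{H}_X(\Omega)\cap L^\infty(\Omega)$, the principal term converges by Lemma \ref{lemma_convergence}; $\int_\Omega f_nv\to\int_\Omega fv$ since $f_n\to f$ in $L^1(\Omega)$ and $v\in L^\infty(\Omega)$; and $a_ng(u_n)\to ag(u)$ in $L^1(\Omega)$ by writing $a_ng(u_n)-ag(u)=a_n(g(u_n)-g(u))+(a_n-a)g(u)$ and applying dominated convergence (with dominating function $2Qa$) to the first term and $\|a_n-a\|_{L^1(\Omega)}\to0$ to the second; hence $\int_\Omega a_ng(u_n)v\to\int_\Omega ag(u)v$. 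Passing to the limit shows that $u$ is a weak solution in the sense of Definition \ref{def_weak_sol}, with $ag(u)\in L^1(\Omega)$ because $|ag(u)|\le Qa$, $u\in\mathscr{H}_X(\Omega)\cap L^\infty(\Omega)$, and the bound \eqref{eq_bound_u}.

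Finally, uniqueness is routine. If $u_1,u_2\in\mathscr{H}_X(\Omega)\cap L^\infty(\Omega)$ both solve \eqref{problem}, subtracting the weak formulations and testing with $u_1-u_2$ gives $\int_\Omega B(x)\nabla(u_1-u_2)\nabla(u_1-u_2)+\int_\Omega a(x)(g(u_1)-g(u_2))(u_1-u_2)=0$; by \eqref{eq_elliptic} the first integral is at least $\frac1\Lambda\|u_1-u_2\|_X^2$ and the second is nonnegative since $a\ge0$ and $g$ is increasing, so $\|u_1-u_2\|_X=0$ and $u_1=u_2$. The only genuinely delicate point is the choice of test function in the $L^\infty$ estimate: it must be admissible in the sense of Definition \ref{def_weak_sol} (hence the auxiliary truncation $T_m$) and, at the threshold $k=g^{-1}(Q)$, must make the signs of $g_n(u_n)$, of the test function and of $a_n$ cooperate so that the $Q$-condition $|f|\le Qa$ lets the datum be absorbed by the zero-order term.
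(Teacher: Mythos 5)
Your proposal is correct and reaches the same a priori bound and limit passage as the paper, but it gets to the approximate solutions by a genuinely different route. The paper keeps the full nonlinearity $g$ in the approximate problems \eqref{problem_approx}, regularizes only the data via $f_n=f/(1+\tfrac1n|f|)$, $a_n=a/(1+\tfrac{Q}{n}a)$, and solves \eqref{problem_approx} by the direct method applied to the functional $J_n$ in \eqref{eq_functional}, which may take the value $+\infty$ and whose weak lower semicontinuity requires the Fatou argument of Step~2 of Proposition \ref{prop_approx}; indeed the authors explicitly remark that truncating $g$ and using a fixed point as in \cite{AB15} would be more involved. You instead truncate both the data and the nonlinearity ($g_n=T_n\circ g$) and invoke Browder--Minty; this is a legitimate third option that trades the delicate semicontinuity of an extended-real-valued functional for the routine verification of boundedness, hemicontinuity, monotonicity and coercivity of a monotone operator, at the price of having to check at the end that $g_n(u_n)=g(u_n)$ once the uniform bound $\|u_n\|_{L^\infty(\Omega)}\le g^{-1}(Q)$ is in hand (which you do, taking $n>Q$). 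Your choice $f_n=\operatorname{sgn}(f)\min\{|f|,Qa_n\}$ builds the approximate $Q$-condition in by construction, where the paper derives it in \eqref{eq_Q_cond_approx} from the monotonicity of $s\mapsto s/(1+s/n)$. The $L^\infty$ estimate itself is the same idea as the paper's (test with $G_k(u_n)$ at the threshold $k=g^{-1}(Q)$), only phrased as a pointwise sign analysis rather than via the inequality \eqref{eq_ineq_1}; your extra truncation $T_m$ is harmless but not needed, since the approximate problems admit all of $\mathscr{H}_X(\Omega)$ as test space. The one point you share with the paper and likewise leave implicit is the justification, in the $X$-elliptic setting, of the chain rule $XG_k(u_n)=Xu_n\chi_{\{|u_n|>k\}}$; the paper appeals to \cite[Corollary 2.2]{GL03} for this, and your argument needs the same reference. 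The energy bound, the $L^1$ convergence of $a_ng(u_n)$ by dominated convergence, the use of Lemma \ref{lemma_convergence} for the principal part, and the uniqueness argument by monotonicity all coincide with the paper's.
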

The proof of Theorem \ref{thm_existence_1} is based on an approximation scheme. We introduce the sequence of approximated problems 
\begin{equation} \tag{$P_n$}\label{problem_approx}
\left\{
\begin{alignedat}{2}
-\mathcal{L}u + a_n(x) g(u)  & = f_n(x) \quad &&\mbox{in} \;\; \Omega, \\
u &= 0 \quad &&\mbox{on} \;\; \partial\Omega, 
\end{alignedat}
\right.
\end{equation}
where the functions $f_n$ and $a_n$ are defined by
\begin{equation}\label{eq_func_approx}
f_n(x)\coloneqq \frac{f(x)}{1+\frac{1}{n}|f(x)|},   \quad a_n(x) \coloneqq \frac{a(x)}{1+\frac{Q}{n} a(x)},    
\end{equation}
for all $n\in\N$. Clearly $f_n \to f$ and $a_n \to a$ pointwise. 
The $Q$-condition \eqref{eq_Q_cond} holds also for $f_n$ and $a_n$: since the function $s\mapsto s/\left(1+\frac{s}{n}\right)$ is increasing, by \eqref{eq_Q_cond} it follows that
\begin{equation}
    \label{eq_Q_cond_approx}
    |f_n(x)| = \frac{|f(x)|}{1+\frac{1}{n} |f(x)|} \le \frac{Q a(x)}{1+\frac{Q}{n} a_n(x)} = Q a_n(x).
\end{equation}
As a first step, we show that \eqref{problem_approx} is solvable for each $n \in \mathbb{N}$.
\begin{prop}
\label{prop_approx}
    For all $n\in\N$ there exists a weak solution $u_n$ of \eqref{problem_approx}, namely $u_n\in\mathscr{H}_X(\Omega)$ such that
    \begin{equation*}
        \int_\Omega B(x) \nabla u_n \nabla v + \int_\Omega a_n(x) g(u_n) v = \int_\Omega f_n(x) v, \quad \forall v\in \mathscr{H}_X(\Omega).    
    \end{equation*}
\end{prop}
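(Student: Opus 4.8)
The plan is to recast \eqref{problem_approx} as a surjectivity statement for a monotone operator on the Hilbert space $\mathscr{H}_X(\Omega)$. The one genuine difficulty is that no growth condition is imposed on $g$, so the Nemytskii operator $u\mapsto a_n(x)g(u)$ need not map $\mathscr{H}_X(\Omega)$ into its dual. The way around this is to exploit that, by \eqref{eq_cond_a} and the definitions \eqref{eq_func_approx}, both $f_n$ and $a_n$ belong to $L^\infty(\Omega)$, and that the $Q$-condition \eqref{eq_Q_cond_approx} holds for the pair $(f_n,a_n)$. Accordingly, I would fix the level $M\coloneqq g^{-1}(Q)$ and replace $g$ by the bounded, continuous, odd, nondecreasing function $\tilde g(s)\coloneqq g(T_M(s))$, which takes values in $[-Q,Q]$ and still satisfies $\tilde g(s)s\ge 0$ for all $s\in\R$.

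First I would solve the truncated problem. Consider $\mathcal A\colon\mathscr{H}_X(\Omega)\to\mathscr{H}_X(\Omega)'$ defined by $\langle\mathcal A u,v\rangle\coloneqq\int_\Omega B(x)\nabla u\,\nabla v+\int_\Omega a_n(x)\tilde g(u)v$. Using the continuity of the bilinear form established before Lemma~\ref{lemma_convergence}, the ellipticity estimate \eqref{eq_elliptic}, H\"older's inequality together with the embedding \ref{sobolev} to control the zero-order term, and dominated convergence (the range of $\tilde g$ being bounded) together with the compact embedding to handle its continuity, one checks that $\mathcal A$ is bounded, hemicontinuous, strictly monotone — since $\int_\Omega a_n(\tilde g(u)-\tilde g(v))(u-v)\ge 0$ while $\int_\Omega B(x)\nabla(u-v)\nabla(u-v)\ge\Lambda^{-1}\|u-v\|_X^2$ — and coercive, because the sign of the zero-order term gives $\langle\mathcal A u,u\rangle\ge\Lambda^{-1}\|u\|_X^2$. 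The Minty--Browder theorem then yields surjectivity of $\mathcal A$; since $f_n\in L^\infty(\Omega)\hookrightarrow\mathscr{H}_X(\Omega)'$, there is a (unique) $u_n\in\mathscr{H}_X(\Omega)$ with $\int_\Omega B(x)\nabla u_n\,\nabla v+\int_\Omega a_n(x)\tilde g(u_n)v=\int_\Omega f_n v$ for every $v\in\mathscr{H}_X(\Omega)$.

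Next I would check that the truncation is inactive, i.e. $\|u_n\|_{L^\infty(\Omega)}\le M$; this forces $\tilde g(u_n)=g(u_n)$ a.e., so that $u_n$ is indeed a weak solution of \eqref{problem_approx}. Testing the equation with $G_M(u_n)\in\mathscr{H}_X(\Omega)$ — admissible because composition with a Lipschitz function vanishing at $0$ maps $\mathscr{H}_X(\Omega)$ into itself, with $X G_M(u_n)=\chi_{\{|u_n|>M\}}Xu_n$ — the principal part is bounded below by $\Lambda^{-1}\|G_M(u_n)\|_X^2$ via \eqref{eq_elliptic}. On $\{u_n>M\}$ one has $\tilde g(u_n)=g(M)=Q$, hence $a_n\tilde g(u_n)=Qa_n\ge|f_n|\ge f_n$ and thus $(f_n-a_n\tilde g(u_n))G_M(u_n)\le0$; the set $\{u_n<-M\}$ is handled symmetrically using oddness. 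Therefore $\Lambda^{-1}\|G_M(u_n)\|_X^2\le\int_\Omega(f_n-a_n\tilde g(u_n))G_M(u_n)\le0$, so $G_M(u_n)\equiv0$ and $|u_n|\le M=g^{-1}(Q)$ a.e.

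I expect the first part to be essentially routine once the functional framework of Section~\ref{sec_defn} is available; the step that needs thought is recognising that \eqref{eq_Q_cond_approx} dictates truncating $g$ at exactly the level $g^{-1}(Q)$ — a higher cutoff would not let the Stampacchia-type computation close, and no cutoff at all would leave the monotone-operator argument inapplicable, since $a_ng(u)$ need not lie in $\mathscr{H}_X(\Omega)'$. As a by-product this already delivers the bound \eqref{eq_bound_u} at the level of the approximating solutions, which will be inherited in the limit in the proof of Theorem~\ref{thm_existence_1}.
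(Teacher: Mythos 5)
Your argument is correct under the hypotheses in force for Theorem \ref{thm_existence_1}, but it proceeds along a genuinely different route from the paper. The paper solves \eqref{problem_approx} variationally: it minimizes the functional $J_n$ of \eqref{eq_functional}, which is allowed to take the value $+\infty$ when $a_nG(u)\notin L^1(\Omega)$, and checks weak lower semicontinuity (via Fatou's lemma and the positivity of the quadratic form) and coercivity before invoking the Direct Method. You instead truncate $g$ at the level $g^{-1}(Q)$, solve the truncated problem by Minty--Browder, and then show via a Stampacchia-type test with $G_M(u_n)$ that the truncation is inactive; all the individual steps (boundedness of $f_n,a_n$, monotonicity and coercivity of $\mathcal A$, the sign argument on $\{|u_n|>M\}$ using \eqref{eq_Q_cond_approx}) check out, and as a bonus you obtain uniqueness of $u_n$ and the bound $\|u_n\|_{L^\infty(\Omega)}\le g^{-1}(Q)$ already at the approximation level, which the paper only extracts later in the proof of Theorem \ref{thm_existence_1}. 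The trade-off is generality: your proof leans essentially on the $Q$-condition and on the strict monotonicity of $g$ (to define $g^{-1}(Q)$, to make $\mathcal A$ monotone, and to close the sign argument), whereas the paper's variational proof uses neither --- it only needs $G\ge 0$ and $a_n\ge 0$. This extra generality is actually exploited later: Proposition 3.9 invokes Proposition \ref{prop_approx} for a non-monotone $g$ satisfying only \eqref{eq_cond_g_2}, and Theorem \ref{thm_existence_linear} invokes it under the weaker condition \eqref{eq_Q_cond_2}, explicitly noting that the existence result for \eqref{problem_approx} ``does not require any specific condition on $a_n$ and $f_n$.'' Your proof would not serve as a drop-in replacement in those two places (though for the linear case $g(s)=s$ no truncation is needed anyway), and it is essentially the truncation-based strategy that Remark 3.6 of the paper deliberately sets aside in favour of the variational one. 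So: valid as a proof of the Proposition as used in Theorem \ref{thm_existence_1}, but be aware that it proves a less portable statement than the paper's argument does.
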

\begin{proof}
We set
\begin{equation*}
 G(t)\coloneqq\int_0^t g(\tau) \,\mathrm{d}\tau.   
\end{equation*}
Let us consider the functional~$J_n$ defined, for any $u\in\mathscr{H}_X(\Omega)$, as
\begin{equation}\label{eq_functional}
J_n(u)\coloneqq \begin{cases}
\dfrac{1}{2}\displaystyle\int_{\Omega} B(x)\nabla u \nabla u + \int_\Omega a_n(x) G(u) - \int_\Omega f_n(x) u \quad &\mbox{ if } a_n G(u)  \in L^1(\Omega),\\
+\infty \quad &\mbox{ if } a_n G(u) \notin L^1(\Omega),
\end{cases}
\end{equation}
so that the solutions of \eqref{problem_approx} coincide with critical points 
of $J_n$. We now split the proof into several steps, investigating the properties of the functional $J_n$.

\medskip
\noindent\textbf{Step 1.} If $a_n G(u) \in L^1(\Omega)$ then $|J_n(u)|<+\infty$.
Simply estimating each term in the functional $J_n$ we get
\begin{equation*}
\begin{aligned}
&\int_\Omega B(x)\nabla u \nabla u \underset{\eqref{eq_elliptic}}{\le} \Lambda \|u\|_X^2 < +\infty, \quad
\left\lvert \int_\Omega a_n(x) G(u) \right\rvert \le \int_\Omega \left\lvert a_n(x) G(u)\right\rvert < +\infty,
\end{aligned}    
\end{equation*}
and
\begin{equation*}
\left\lvert \int_\Omega f_n(x) u \right\rvert \le \|f_n\|_{L^2(\Omega)}\|u\|_{L^2(\Omega)} < +\infty,    
\end{equation*}
by H\"{o}lder's inequality and the Sobolev embedding \ref{sobolev}.

\noindent\textbf{Step 2.} $J_n$ is weakly lower semicontinuous, in the sense that for every sequence $(u_j)\subseteq \mathscr{H}_X(\Omega)$ such that $u_j\rightharpoonup u$ for some $u\in \mathscr{H}_X(\Omega)$ as $j\to+\infty$ we have
\begin{equation*}
\liminf_{j\to+\infty} J_n(u_j)\ge J_n(u).    
\end{equation*}
Suppose by contradiction that there exists a sequence $(u_j)$ such that $u_j\rightharpoonup u$ for some $u\in \mathscr{H}_X(\Omega)$ for $j\to+\infty$ and such that
\begin{equation}
    \label{eq_limit}
    \lim_{j\to+\infty} J_n(u_j) < J_n(u).
\end{equation}
In particular,
\begin{equation*}
\lim_{j\to+\infty } J_n(u_j) < +\infty,     
\end{equation*}
and so by the previous step, $a_n G(u_j)\in L^1(\Omega)$ for $k$ sufficiently large. By the compact embeddings \ref{sobolev}, there exists a subsequence --- still denoted by $(u_j)$ ---  which converges strongly to $u$ in $L^2(\Omega)$ and almost everywhere in $\Omega$.

Since $g$ is odd and increasing, it follows that 
\begin{equation}
    \label{eq_positive}
    G(t)\ge 0 \quad\hbox{for any $t\in\R$}. 
\end{equation}
So, since $a_n(x)\ge 0$ for all $x\in\Omega$ and $G(t)$ is continuous, from Fatou's Lemma we have that
\begin{equation}
\label{eq_convergence1}
 \liminf_{j\to+\infty} \int_\Omega a_n(x) G(u_j) \ge \int_\Omega a_n(x) G(u).   
\end{equation}
Moreover, since $f_n\in L^2(\Omega)$, the strong convergence of $u_j$ to $u$ in $L^2(\Omega)$ implies that
\begin{equation}
\label{eq_convergence2}
\lim_{j\to+\infty} \int_\Omega f_n(x) u_j = \int_\Omega f_n(x) u.   
\end{equation}
Joining together the previous estimates, recalling the definition of $J_n$ in \eqref{eq_functional} and by \eqref{eq_elliptic} we get
\begin{align}
    \label{eq_functional_limit}
    \lim_{j\to+\infty} J_n(u_j) &\ge \liminf_{j\to+\infty} \left( \frac{1}{2} \|u_j\|_X^2 + \int_\Omega a_n(x) G(u_j) - \int_\Omega f_n(x) u_j  \right) \\ \notag 
    &\ge \frac{1}{2\Lambda} \|u\|_X^2 + \int_\Omega a_n(x) G(u) - \int_\Omega f_n(x) u.
\end{align}
So,
\begin{equation*}
 \int_\Omega a_n(x) G(u) \le \lim_{j\to+\infty} J_n(u_j) + \int_\Omega f_n(x) u \le \|f_n\|_{L^2(\Omega)}\|u\|_{L^2(\Omega)} + \lim_{j\to+\infty} J_n(u_j) \underset{\eqref{eq_limit}}{<} +\infty,   
\end{equation*}
which implies $a_n G(u)\in L^1(\Omega).$ 

In order to conclude, we observe that the quadratic form
\begin{equation} \label{eq:3.11}
    u \mapsto \Vert u \Vert^2 = \int_\Omega B(x) \nabla u \nabla u
\end{equation}
is weakly lower semicontinuous on $\mathscr{H}_X(\Omega)$. This is actually a consequence of the positivity of this form, see for instance \cite{Hestenes}, but we provide the details for the reader's convenience.
By \eqref{eq_X_ell} it follows
$$
\frac{1}{\Lambda} \|u\|_X^2 \le \|u\|^2 \le \Lambda\|u\|_X^2 \quad \hbox{for each $u \in \mathscr{H}_X(\Omega)$,}
$$
hence the quadratic form \eqref{eq:3.11} is a norm on $\mathscr{H}_X(\Omega)$ equivalent to $\|\cdot\|_X$.
So $\|u\|\le \liminf_{j\to+\infty}\|u_j\|$, that is
\begin{equation}
    \label{eq_estim_weak}
    \liminf_{j\to+\infty}\int_\Omega B(x)\nabla u_j \nabla u_j \ge \int_\Omega B(x) \nabla u \nabla u.
\end{equation}
By \eqref{eq_estim_weak} and the previous estimates \eqref{eq_convergence1}-\eqref{eq_convergence2}
\begin{align*}
    \lim_{j\to+\infty} J_n(u_j) &\ge \liminf_{j\to+\infty} \left(\frac{1}{2}\int_\Omega B(x)\nabla u_j \nabla u_j + \int_\Omega a_n(x) G(u_j) - \int_\Omega f_n(x) u_j \right)
    \\ &\ge   \frac{1}{2} \int_\Omega B(x) \nabla u \nabla u +\int_\Omega a_n(x) G(u) - \int_\Omega f_n(x) u 
    \\&= J_n(u), 
\end{align*}
which is in contradiction with \eqref{eq_limit}.

\noindent\textbf{Step 3.} The functional $J_n$ is coercive, namely $J(u) \to +\infty$ as $\|u\|_X \to +\infty$.
By the definition of $J_n$ in \eqref{eq_functional}, \eqref{eq_positive} and \eqref{eq_elliptic} it follows that
\begin{align*}
    J_n(u) &\ge \frac{1}{2\Lambda} \| u\|_X^2 +\int_\Omega a_n(x) G(u) - \int_\Omega f_n(x) u \\
    &\ge  \frac{1}{2\Lambda} \| u\|_X^2 - \int_\Omega f_n(x) u \\
    & \ge  \frac{1}{2\Lambda} \| u\|_X^2 - \int_\Omega |f_n(x)| |u|.
\end{align*}
Now by H\"{o}lder's inequality and the Sobolev embedding \ref{sobolev}
\begin{equation*}
J_n(u)\ge  \frac{1}{2\Lambda} \| u\|_X^2 - S^{-1/2}\|f_n\|_{L^{\frac{2N_X}{N_X+2}}(\Omega)}\|u\|_X,    
\end{equation*}
where $S$ is the Sobolev constant defined in \eqref{eq_sob_const} and $N_X$ is defined in \eqref{eq_dimension}. Passing to the limit as $\|u\|_X\to+\infty$, we can conclude.

\noindent\textbf{Step 4.} There exists a solution $u_n\in\mathscr{H}_X(\Omega)$ to \eqref{problem_approx}.

As an application of the Direct Method in the Calculus of Variations (see e.g. \cite[Theorem 5.5]{AbrosettiMalchiodi}), the weakly lower semicontinuous and coercive functional $J_n$ admits a critical point $u_n\in\mathscr{H}_X(\Omega)$, which is a solution of \eqref{problem_approx}.
\end{proof}

\begin{rmk}
    Although the approximated problem was originally solved in \cites{AB15, AB21} by means of a fixed-point argument, in our case the discussion of \cite[Remark 2.8]{ADLSV25} is in force and would lead us to a rather involved proof based on a truncation of the nonlinearity $g$. As we have shown, a variational approach seems to be much easier for our purposes.
\end{rmk}
We can now prove the existence of a solution of \eqref{problem}.
\begin{proof}[Proof of Theorem \ref{thm_existence_1}]  
Reasoning as in \cite[Corollary 2.2]{GL03} we can check that $G_k(u_n)$ can be used as a test function in the approximated problem \eqref{problem_approx}. This gives, thanks to \eqref{eq_Q_cond_approx} and \eqref{eq_elliptic}
\begin{equation*}
  \frac{1}{\Lambda} \int_\Omega |X G_k(u_n)|^2 + \int_\Omega a_n(x) g(u_n) G_k(u_n)\le \int_\Omega |f_n(x)| |G_k(u_n)| \le Q \int_\Omega a_n(x) |G_k(u_n)|.    
\end{equation*}
Since it is easily seen that $G_k(u_n) u_n = |G_k(u_n)| |u_n|$, we get
\begin{equation}
    \label{eq_ineq_1}
      \frac{1}{\Lambda} \int_\Omega |XG_k(u_n)|^2 + \int_\Omega a_n(x) \left[|g(u_n) - Q| \right] |G_k(u_n)|\le 0.
\end{equation}
By Remark \ref{rmk:1.1} and since $Q\in \left(0,\lim_{s\to+\infty}g(s)\right)$, we can choose $k=g^{-1}(Q)$ in \eqref{eq_ineq_1}. As a consequence $g(u_n)-Q\ge 0$ and $G_k(u_n) =0$, that is the sequence $(u_n)$ is bounded in $L^\infty(\Omega)$ with $\|u_n\|_{L^\infty(\Omega)}\le g^{-1}(Q)$. 

So, we can use $u_n$ as a test function in the weak formulation of \eqref{problem}, getting
\begin{align*}
    \frac{1}{\Lambda} \int_\Omega |Xu_n|^2 &\le   \frac{1}{\Lambda}\int_\Omega |Xu_n|^2 + \int_\Omega a_n(x) g(u_n) \le \int_\Omega B(x)\nabla u_n \nabla u_n + \int_\Omega a_n(x) g(u_n) \\&= \int_\Omega f_n(x) u_n\le Q\int_\Omega a_n(x) u_n \le Q g^{-1}(Q) \int_\Omega a(x).
\end{align*}
So, the sequence $(u_n)$ is bounded in $\mathscr{H}_X(\Omega)$. Thus, there exists $u\in\mathscr{H}_X(\Omega)$ and a subsequence -- still denoted by $(u_n)$ -- which converges weakly in $\mathscr{H}_X(\Omega)$ and a.e. to $u$ with
\begin{equation*}
    \|u\|_{L^\infty(\Omega)}\le g^{-1}(Q).
\end{equation*}
Moreover, using that $|a_n(x)g(u_n)|\le a(x) Q$, we obtain by the dominated convergence theorem the $L^1(\Omega)$ convergence of the sequence $(a_n(x)g(u_n))$ to $a(x)g(u)\in L^1(\Omega)$, which together with the $L^1(\Omega)$ convergence of $f_n(x)$ to $f(x)$ allows to pass to the limit in the approximated problem to conclude, also thanks to Lemma \ref{lemma_convergence}, that $u$ is a weak solution of $\eqref{problem}$.

To show that the solution is unique, suppose that $u_1, u_2\in L^\infty(\Omega)\cap\mathscr{H}_X(\Omega)$ are two solution of \eqref{problem}. Using $u_1-u_2$ as a test function in the weak formulation of \eqref{problem}, one obtains that
\begin{equation*}
  \frac{1}{\Lambda} \|u_1 - u_2\|_X^{2} \le \int_\Omega B(x) \nabla(u_1-u_2)\nabla(u_1-u_2) + \int_\Omega a(x) \left(g(u_1) - g(u_2) \right) (u_1-u_2) = 0.    
\end{equation*}
Observe that $\left(g(u_1) - g(u_2) \right) (u_1-u_2) \geq 0$ by assumption \eqref{eq_cond_g}. Recalling that $a(x)\ge 0$, we deduce $u_1 = u_2$.
\end{proof}

\begin{rmk}
A simple example of a function $g$ satisfying \eqref{eq_cond_g} is given by $g(s) = |s|^{\gamma-1}s$ for all $\gamma > 0$. The case $\gamma = 1$ corresponds to the linear case, generalizing the result in \cite[Theorem 2.1]{AB21}.    
\end{rmk}

We conclude this section observing that it is possible to obtain a bounded solution of \eqref{problem} even when $g$ is not increasing. This can be achieved if we require that
\begin{equation}
    \label{eq_cond_g_2}\tag{H$'_g$}
  \hbox{  $g$ is continuous and odd, $\lim_{s\to+\infty} g(s) = +\infty$ and $\int_0^t g(\tau)\,\mathrm{d}\tau\ge 0$ for all $t\in\R$}.
\end{equation} 
A simple model illustrating this scenario is given by the family of functions $g(s) = a \sin s + b s$, for all $a > b > 0$. 
\begin{prop}
    If $\eqref{eq_cond_a}$-\eqref{eq_cond_g_2} and \eqref{eq_Q_cond} are satisfied, then there exists a weak solution $u\in \mathscr{H}_X(\Omega)\cap L^\infty(\Omega)$ to \eqref{problem}.
\end{prop}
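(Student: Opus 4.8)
The plan is to run exactly the approximation scheme used for Theorem~\ref{thm_existence_1}, observing that monotonicity of $g$ is never essential there and replacing the (now undefined) quantity $g^{-1}(Q)$ by a level extracted from the hypothesis $\lim_{s\to+\infty}g(s)=+\infty$. First I would note that Proposition~\ref{prop_approx} remains valid verbatim: its proof only uses that $g$ is continuous and odd and that $G(t)=\int_0^t g\ge 0$, all of which are contained in \eqref{eq_cond_g_2}. Hence for each $n\in\N$ there is a weak solution $u_n\in\mathscr{H}_X(\Omega)$ of \eqref{problem_approx}, and the approximate $Q$-condition \eqref{eq_Q_cond_approx} still holds.

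The only genuinely new point is the uniform $L^\infty$ bound. Since $g(s)\to+\infty$ as $s\to+\infty$, fix $k>0$ such that $g(s)\ge Q$ for every $s\ge k$; by oddness, $g(s)\le -Q$ for every $s\le -k$. Using $G_k(u_n)$ as a test function in \eqref{problem_approx}—admissible as in the proof of Theorem~\ref{thm_existence_1}—and invoking \eqref{eq_elliptic} together with \eqref{eq_Q_cond_approx}, one obtains
\[
\frac{1}{\Lambda}\int_\Omega |X G_k(u_n)|^2 + \int_\Omega a_n(x)\, g(u_n)\, G_k(u_n) \le Q\int_\Omega a_n(x)\,|G_k(u_n)|.
\]
On $\{u_n>k\}$ one has $G_k(u_n)=u_n-k>0$ and $g(u_n)\ge Q$, while on $\{u_n<-k\}$ one has $G_k(u_n)=u_n+k<0$ and $g(u_n)\le -Q$; in both cases $g(u_n)G_k(u_n)\ge Q\,|G_k(u_n)|$, and the term vanishes on $\{|u_n|\le k\}$. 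Therefore the middle term absorbs the right-hand side, leaving $\int_\Omega |XG_k(u_n)|^2\le 0$. Since $\|\cdot\|_X$ is a norm on $\mathscr{H}_X(\Omega)$ and $G_k(u_n)\in\mathscr{H}_X(\Omega)$, this forces $G_k(u_n)=0$, i.e. $\|u_n\|_{L^\infty(\Omega)}\le k$ for every $n$.

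From here the argument follows Theorem~\ref{thm_existence_1} with one adjustment. The term $\int_\Omega a_n g(u_n)u_n$ is no longer sign-definite, but the uniform bound $\|u_n\|_{L^\infty(\Omega)}\le k$ and continuity of $g$ give $|g(u_n)|\le M_k:=\max_{|s|\le k}|g(s)|$; testing \eqref{problem_approx} with $u_n$ and using \eqref{eq_elliptic}, \eqref{eq_Q_cond_approx} and $a_n\le a$ then yields $\tfrac1\Lambda\|u_n\|_X^2\le (Q+M_k)\,k\,\|a\|_{L^1(\Omega)}$, so $(u_n)$ is bounded in $\mathscr{H}_X(\Omega)$. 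Passing to a subsequence, $u_n\rightharpoonup u$ in $\mathscr{H}_X(\Omega)$ and $u_n\to u$ a.e. by \ref{sobolev}, with $\|u\|_{L^\infty(\Omega)}\le k$. By Lemma~\ref{lemma_convergence}, $\int_\Omega B(x)\nabla u_n\nabla v\to\int_\Omega B(x)\nabla u\nabla v$; since $|f_n|\le |f|\in L^1(\Omega)$ we get $f_n\to f$ in $L^1(\Omega)$ and hence $\int_\Omega f_n v\to\int_\Omega f v$ for $v\in\mathscr{H}_X(\Omega)\cap L^\infty(\Omega)$; and since $a_n g(u_n)\to a g(u)$ a.e. with $|a_n g(u_n)|\le M_k\, a\in L^1(\Omega)$, dominated convergence gives $a g(u)\in L^1(\Omega)$ and $\int_\Omega a_n g(u_n) v\to\int_\Omega a g(u) v$. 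Thus $u$ is a weak solution of \eqref{problem} in the sense of Definition~\ref{def_weak_sol}, bounded by $k$.

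I expect the main obstacle to be precisely the $L^\infty$ estimate: the absence of monotonicity rules out the choice $k=g^{-1}(Q)$, so one must instead use $\lim_{s\to+\infty}g(s)=+\infty$ (and oddness) to produce a level $k$ beyond which $|g|\ge Q$ with the correct sign, which is exactly what makes the computation for $g(u_n)G_k(u_n)$ go through. Uniqueness is of course no longer expected, and indeed it is not claimed in the statement.
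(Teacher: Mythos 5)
Your proposal is correct and follows essentially the same route as the paper: existence of approximate solutions via Proposition~\ref{prop_approx} (which indeed only needs $G\ge 0$), the uniform $L^\infty$ bound obtained by testing with $G_k(u_n)$ at a level $k$ where $g\ge Q$ (using oddness for the negative side), boundedness in $\mathscr{H}_X(\Omega)$ by testing with $u_n$ and bounding $|g|$ on $[-k,k]$, and passage to the limit by dominated convergence and Lemma~\ref{lemma_convergence}. Your explicit sign analysis of $g(u_n)G_k(u_n)\ge Q|G_k(u_n)|$ on $\{u_n>k\}$ and $\{u_n<-k\}$ is just a more detailed justification of the inequality the paper writes directly.
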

\begin{proof}
The proof is very similar to that of Theorem \ref{thm_existence_1}, so we only highlight the main differences.

The existence of a solution $u_n\in\mathscr{H}_X(\Omega)$ of the approximated problem \eqref{problem_approx} is guaranteed by Proposition \ref{prop_approx}. 
Now, by \eqref{eq_cond_g_2}, we can choose $k_0>0$ such that
    \begin{equation}
        \label{eq_bound_g}
        g(s)\ge Q \quad\text{ for every $s\ge k_0$.}
    \end{equation}
Then, using $G_{k_0}(u_n)$ as a test function in the approximated problem gives, with the same calculations as in the proof of Theorem \ref{thm_existence_1},
\begin{equation*}
  \frac{1}{\Lambda}\int_\Omega |X G_{k_0}(u_n)|^2 + \int_\Omega a_n(x) \left[|g(u_n)| - Q  \right] |G_{k_0}(u_n)| \le 0,    
\end{equation*}
which implies $\|u_n\|_{L^\infty(\Omega)}\le k_0$ and the sequence $(u_n)$ is bounded in $L^\infty(\Omega)$.

So, using $u_n$ as a test function in \eqref{problem_approx},
\begin{equation*}
  \frac{1}{\Lambda} \int_\Omega |Xu_n|^2 - \max_{|s|\le k_0} |g(s)s| \int_\Omega a_n(x)\le k_0 \int_\Omega |f_n(x)|,    
\end{equation*}
that is
\begin{equation*}
  \frac{1}{\Lambda} \|u_n\|_X^2 \le k_0 \int_\Omega |f| + \max_{|s|\le k_0} |g(s)s|\int_\Omega a(x).    
\end{equation*}
This proves that $(u_n)$ is bounded in $\mathscr{H}_X(\Omega)$, so there exists a function $u \in \mathscr{H}_X(\Omega)$ such that, up to a subsequence, $u_n \rightharpoonup u$ in $\mathscr{H}_X(\Omega)$ and $u_n\to u$ a. e. in $\Omega$.

Now we pass to the limit as $n\to+\infty$ as in Theorem \ref{thm_existence_1}. The dominated convergence theorem can still be applied to show that $a_ng(u_n) \to a g(u)$ in $L^1(\Omega)$, since we have
\[
|a_n(x)g(u_n)| \leq a(x) \max_{|s|\leq k_0} |g(s)| \in L^1(\Omega).
\]
Passing to the limit, it follows that $u$ is a bounded weak solution of \eqref{problem}.
\end{proof}

\begin{rmk}
    Since $g$ is not increasing, we cannot guarantee the uniqueness of the solution as before. 
    By the definition of $k_0$ in \eqref{eq_bound_g} we can, however, give a bound of the $L^\infty$-norm of the solutions related to the function $g$ and the constant $Q$:
    \begin{equation*}
          \|u\|_{L^\infty(\Omega)} \le \inf \{k>0 : g(s)\ge Q \hbox{ for every $s\ge k$}\}.
    \end{equation*}
    The latter estimate coincides with \eqref{eq_bound_u} when $g$ is increasing.
\end{rmk}

\section{Generalized $Q$-condition for linear problems}
\label{sec_linear}
In this section, we study the regularizing effect of condition \eqref{eq_Q_cond_2} in linear problems involving $X$-elliptic operators. In particular, we consider the problem
\begin{equation} \tag{$\tilde{P}$}\label{problem_2}
\left\{ 
    \begin{alignedat}{2}
        -\mathcal{L}u + a(x) u & = f(x) \quad &&\mbox{in} \;\; \Omega, \\ 
        u &= 0 \quad &&\mbox{on} \;\; \partial\Omega, 
    \end{alignedat} 
\right. 
\end{equation}
where $\mathcal{L}$ is defined in \eqref{eq_def_operator}. We suppose that $\mathcal{L}$ is an $X$-elliptic operator in the sense of Definition \ref{def_X_elliptic} and let $\Omega$ be a bounded subset of $\R^N$ $(N\ge 2)$.
Let now suppose that there exists a function $0\le R \in L^{\frac{2N_X}{N_X +2}}(\Omega)$, where $N_X$ is defined in \eqref{eq_dimension}, such that \eqref{eq_Q_cond_2} is verified.

\begin{rmk}
We observe that the generalized $Q$-condition \eqref{eq_Q_cond_2} holds also for $f_n$ and $a_n$, defined in \eqref{eq_func_approx}. In fact, using again the monotonicity of the function $s\mapsto s/\left(1+\frac{s}{n}\right)$ we get
\begin{equation}
    \label{eq_Q_cond_gen_approx} 
    |f_n(x)|= \frac{|f(x)|}{1+\frac{1}{n}|f(x)|}\le \frac{Q a(x) +R(x)}{1+\frac{1}{n}\left( Qa(x) + R(x) \right)}\le Q a_n(x) + R(x).
\end{equation}
\end{rmk}
The following existence result holds.
\begin{theorem} \label{thm_existence_linear}
    Suppose \eqref{eq_cond_a} and \eqref{eq_Q_cond_2}. Then problem \eqref{problem_2} admits a unique weak solution $u\in\mathscr{H}_X(\Omega)$.
\end{theorem}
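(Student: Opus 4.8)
The plan is to argue by approximation, in the spirit of the proof of Theorem~\ref{thm_existence_1}. For each $n\in\N$ I would consider the approximated problem $(P_n)$ with $g(s)=s$ and with $a_n,f_n$ as in \eqref{eq_func_approx}. Since here $G(t)=\int_0^t g(\tau)\,\mathrm{d}\tau=t^2/2$, arguing exactly as in Proposition~\ref{prop_approx} (whose proof only uses that $a_n\ge0$ is bounded and that $f_n\in L^{2N_X/(N_X+2)}(\Omega)$, which holds because $f_n\in L^\infty(\Omega)$ and $\Omega$ is bounded) one obtains a weak solution $u_n\in\mathscr{H}_X(\Omega)$ of $(P_n)$; it is unique, since testing the difference of two solutions with itself and using \eqref{eq_elliptic} together with $a_n\ge0$ forces them to coincide.

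Next I would derive $n$-independent a priori bounds by taking $u_n$ itself as a test function in $(P_n)$. Combining \eqref{eq_elliptic}, the bound \eqref{eq_Q_cond_gen_approx} for $f_n$, Young's inequality to absorb $Q\int_\Omega a_n|u_n|\le\tfrac12\int_\Omega a_n u_n^2+\tfrac{Q^2}{2}\|a\|_{L^1(\Omega)}$, and the Sobolev embedding~\ref{sobolev} to absorb $\int_\Omega R\,|u_n|\le S^{-1/2}\|R\|_{L^{2N_X/(N_X+2)}(\Omega)}\|u_n\|_X$ into the gradient term, I get bounds on $\|u_n\|_X$ and on $\int_\Omega a_n u_n^2$ that are independent of $n$. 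Hence, along a subsequence, $u_n\rightharpoonup u$ in $\mathscr{H}_X(\Omega)$, $u_n\to u$ in $L^p(\Omega)$ for every $p<2^*_X$ and a.e.\ in $\Omega$; in particular $a_n u_n\to au$ a.e.\ in $\Omega$.

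The crucial point is to pass to the limit in the zero order term $\int_\Omega a_n u_n v$, for which I would show that $\{a_nu_n\}$ is uniformly integrable: for every measurable $E\subseteq\Omega$, Cauchy--Schwarz and $a_n\le a$ give
\[
\int_E a_n|u_n|\le\Bigl(\int_E a_n\Bigr)^{1/2}\Bigl(\int_E a_n u_n^2\Bigr)^{1/2}\le\Bigl(\int_E a\Bigr)^{1/2}\Bigl(\int_\Omega a_n u_n^2\Bigr)^{1/2},
\]
and the right-hand side tends to $0$ as $|E|\to0$ uniformly in $n$, since $a\in L^1(\Omega)$. By Vitali's convergence theorem, $a_nu_n\to au$ in $L^1(\Omega)$, so $au\in L^1(\Omega)$ and $\int_\Omega a_nu_n v\to\int_\Omega auv$ for every $v\in\mathscr{H}_X(\Omega)\cap L^\infty(\Omega)$. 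Since $f_n\to f$ in $L^1(\Omega)$ by dominated convergence ($|f_n|\le|f|$) and $\int_\Omega B(x)\nabla u_n\nabla v\to\int_\Omega B(x)\nabla u\nabla v$ by Lemma~\ref{lemma_convergence}, letting $n\to+\infty$ in the weak formulation of $(P_n)$ shows that $u$ is a weak solution of \eqref{problem_2}.

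Finally, for uniqueness I cannot test with $u_1-u_2$ directly, because the solutions need not be bounded; instead I would use $T_k(u_1-u_2)\in\mathscr{H}_X(\Omega)\cap L^\infty(\Omega)$, admissible by the truncation argument of \cite[Corollary~2.2]{GL03}. Subtracting the weak formulations and using $B(x)\nabla(u_1-u_2)\nabla T_k(u_1-u_2)=B(x)\nabla T_k(u_1-u_2)\nabla T_k(u_1-u_2)\ge\frac1\Lambda|XT_k(u_1-u_2)|^2$ together with $a\,(u_1-u_2)\,T_k(u_1-u_2)\ge0$ and $a\ge0$, one finds $\|T_k(u_1-u_2)\|_X=0$, hence $T_k(u_1-u_2)=0$ a.e.\ for every $k>0$, i.e.\ $u_1=u_2$. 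I expect the main obstacle to be the $L^1$-convergence $a_nu_n\to au$, that is, establishing the uniform integrability above so as to justify passing to the limit in the zero order term; the remaining steps are routine given the machinery already set up in Sections~\ref{sec_defn} and~\ref{sec_semilinear}.
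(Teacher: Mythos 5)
Your proposal is correct, and it diverges from the paper's proof at exactly the step you identify as the crux: the uniform integrability of $(a_nu_n)$ needed for Vitali's theorem. The paper obtains it by testing the approximated problem with the auxiliary function $\psi_{k,\delta}(u_n)$, letting $\delta\to0$ to get $\int_{\{|u_n|\ge k\}}a_n|u_n|\le\int_{\{|u_n|\ge k\}}|f|$, and then combining the $L^1$-bound on $(u_n)$ with the absolute continuity of the integral of $f$; this argument does not use the linearity of the zero order term and is the one that survives for general $g$. You instead keep the quadratic information $\sup_n\int_\Omega a_nu_n^2<+\infty$ from the energy estimate (your Young-inequality absorption, in place of the paper's splitting of $Q\int a_n|u_n|$ over $\{|u_n|\le Q\}$ and $\{|u_n|>Q\}$, is what preserves this bound) and then get equi-integrability directly from Cauchy--Schwarz with respect to the measure $a_n\,dx$ together with $a_n\le a\in L^1(\Omega)$. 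Your route is shorter and avoids the extra test function, at the price of being tied to $g(s)=s$; the paper's route is more robust. The remaining steps --- existence of $u_n$ via Proposition~\ref{prop_approx}, passage to the limit via Lemma~\ref{lemma_convergence} and $f_n\to f$ in $L^1(\Omega)$, and uniqueness by testing with $T_k(u_1-u_2)$ --- coincide with the paper's.
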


\begin{proof}
The existence of a weak solution $u_n\in\mathscr{H}_X(\Omega)$ to the approximated problem, i.e. a function $u_n\in\mathscr{H}_X(\Omega)$ such that 
    \begin{equation}
    \label{prob_approx_2}
        \int_\Omega B(x)\nabla u_n\nabla v + \int_\Omega a_n(x) u_n v = \int_\Omega f_n(x) v, \quad \forall v\in\mathscr{H}_X(\Omega)
    \end{equation}
    is given by Proposition \ref{prop_approx} with $g(s) = s$, since it is a general result that does not require any specific condition on $a_n$ and $f_n$.

 Consider the sequence $(u_n)$ of solutions of \eqref{prob_approx_2}. Choosing $u_n $ as a test function in \eqref{prob_approx_2} we get by \eqref{eq_elliptic} and \eqref{eq_Q_cond_gen_approx}
    \begin{equation}
        \label{eq_bound_2}
    \frac{1}{\Lambda}\int_\Omega |Xu_n|^2 + \int_\Omega a_n(x) u_n^2 \le Q \int_\Omega a_n(x) |u_n| + \int_\Omega R(x) |u_n|.
    \end{equation}
    Observe that $Q < |u_n|$ implies $Q |u_n| < |u_n|^2$, hence
\begin{align*}
    Q\int_\Omega a_n(x) |u_n| &= Q \int_{|u_n|\le Q} a_n(x) |u_n| + Q \int_{|u_n|>Q} a_n(x) |u_n|\\
   & \le Q \int_{|u_n|\le Q} a_n(x) |u_n| + \int_{|u_n|>Q} a_n(x) |u_n|^2,
\end{align*}
    we get, after bringing the term $\int_{|u_n|>Q} a_n(x) |u_n|^2$ to the left-hand side of \eqref{eq_bound_2},
    \begin{align*}
           \frac{1}{\Lambda} \int_\Omega |X u_n|^2
            &\le \frac{1}{\Lambda} \int_\Omega |Xu_n|^2 + \int_{|u_n|\le Q} a_n(x) |u_n|^2
            \\&
            \le Q\int_{|u_n|\le Q} a_n(x) |u_n| + \int_\Omega R(x) |u_n|
           \\ &
           \le Q^2 \int_\Omega a_n(x) + \int_\Omega R(x) |u_n|.
    \end{align*}
Now by H\"{o}lder's inequality and the Sobolev embedding \ref{sobolev}
\begin{equation*}
\frac{1}{\Lambda}\|u_n\|_X^2 \le Q^2\int_\Omega a(x) + S^{-1/2} \| R\|_{L^{\frac{2N_X}{N_X+2}}(\Omega)} \|u_n\|_X,    
\end{equation*}
where $S$ is defined in \eqref{eq_sob_const}.
So the sequence $(u_n)$ is bounded in $\mathscr{H}_X(\Omega)$ and then there exist $u\in\mathscr{H}_X(\Omega)$ and a subsequence -- still denoted by $(u_n)$ -- such that $u_n\rightharpoonup u$ in $\mathscr{H}_X(\Omega)$ and $u_n \to u$ a.e. in $\Omega$.

Now, given $\delta, k>0$, we define
\begin{equation*}
   \psi_{k,\delta}(s) \coloneqq
\begin{cases}
    0, & \hbox{if $|s|\le k,$}\\
    \frac{1}{\delta}(s-\operatorname{sgn}(s)k), & \hbox{if $k<|s|<k+\delta$,}\\
    1, & \hbox{if $|s|\ge k+\delta$.}
\end{cases}  
\end{equation*}
We have
\begin{equation*}
\nabla \psi_{k,\delta}(u_n) =
\begin{cases}
    0, & \hbox{if $|u_n|\le k ~\vee ~ |u_n|\ge k+\delta$,}\\
    \frac{1}{\delta}\nabla u_n & \hbox{if $k<|u_n|<k+\delta$,}
\end{cases}    
\end{equation*}
and we use $\psi_{k,\delta}(u_n)$ as a test function in \eqref{prob_approx_2}. We drop the positive term $\frac{1}{\Lambda\delta}\int_{k< |u_n|< k+\delta} |Xu_n|^2$ in \eqref{eq_elliptic} to get
\begin{equation*}
\int_\Omega a_n(x) u_n \psi_{k,\delta} (u_n) \le \int_\Omega |f_n(x)| |\psi_{k,\delta}(u_n)|\le \int_\Omega |f(x)| |\psi_{k,\delta}(u_n)|.    
\end{equation*}
Since trivially $\chi_{\{|u_n|\ge k+\delta\}}\le |\psi_{k,\delta}(u_n)|\le \chi_{\{|u_n|\ge k\}}$, we obtain
\begin{equation*}
\int_{|u_n|\ge k+\delta } a_n(x) |u_n| \le \int_\Omega a_n(x) u_n \psi_{k,\delta}(u_n) \le \int_\Omega |f(x)||\psi_{k,\delta}(u_n)| \le \int_{|u_n|\ge k} |f(x)|.    
\end{equation*}
Letting $\delta\to 0$, Fatou's lemma yields
\begin{equation*}
\int_{|u_n|\ge k} a_n(x) |u_n| \le \int_{|u_n|\ge k} |f(x)|,    
\end{equation*}
so that, for every measurable subset $E\subseteq\Omega$, we have
\begin{align*}
\int_E a_n(x) |u_n| &= \int_{E\cap \{ |u_n|< k \} } a_n(x) |u_n| +  \int_{E\cap \{ |u_n| \ge k \} } a_n(x) |u_n|   \\&\le k \int_E a_n(x) + \int_{|u_n|\ge k} a_n(x) |u_n| \\& \le k \int_E a(x) + \int_{|u_n|\ge k} |f(x)|.
\end{align*}
Since $a$, $f\in L^1(\Omega)$ and $(u_n)$ is bounded in $L^1(\Omega)$, by the absolute continuity of the integral, see \cite[Proposition 16.3]{Secchi}, for any $\varepsilon>0$ we can pick $k$ so large that
\begin{equation*}
\int_{|u_n|\ge k} |f(x)|<\varepsilon.    
\end{equation*}
Then
\begin{equation*}
\lim_{|E|\to 0} \int_E a_n(x) |u_n|\le \varepsilon,    
\end{equation*}
uniformly with respect to $n\in\N$, and we can use Vitali theorem (see e.g. \cite[Theorem 4.5.4]{Bogachev}) to prove that
\begin{equation*}
    a_n(x)u_n \to a(x) u \quad  \mbox{in $L^1(\Omega)$}.    
\end{equation*}
Now, passing to the limit in \eqref{prob_approx_2} and using Lemma \ref{lemma_convergence}, we have that $u$ is a weak solution of \eqref{problem_2}, in the sense of Definition \ref{def_weak_sol}.

To prove the uniqueness of the solution, let $u_1, u_2\in\mathscr{H}_X(\Omega)$ two weak solutions of \eqref{problem_2}. Observe that
\begin{align*}
\int_\Omega a(x) (u_1-u_2) T_k(u_1-u_2) &= \int_{|u_1-u_2|<k} a(x)(u_1-u_2)(u_1-u_2) + k \int_{ \{u_1-u_2 \ge k\}} a(x) (u_1-u_2) \\&\quad+k \int_{\{u_2-u_1\ge k \}} a(x) (u_2-u_1)\ge 0,
\end{align*}
since $a(x)\ge 0$. 
We now use $T_k(u_1-u_2)\in\mathscr{H}_X(\Omega)\cap L^\infty(\Omega)$ as a test function in \eqref{problem_2}, which gives
\begin{align*}
   0 &= \int_\Omega B(x)\nabla (u_1-u_2) \nabla T_k(u_1-u_2) + \int_\Omega a(x) (u_1-u_2)T_k(u_1-u_2) \\&= \int_{|u_1-u_2|<k} B(x) \nabla (u_1-u_2)\nabla(u_1-u_2) +\int_\Omega a(x) (u_1-u_2)T_k(u_1-u_2) \\&\ge \int_{|u_1-u_2|<k} B(x) \nabla (u_1-u_2)\nabla(u_1-u_2) \ge \frac{1}{\Lambda} \, \bigl\| (u_1 - u_2)\big|_{\{|u_1 - u_2| < k\}} \bigr\|^2.
\end{align*}
So, $u_1 = u_2$ on the set $\{|u_1-u_2|<k \}\subseteq\Omega$, for all $k>0$, which implies $u_1 = u_2$ a.e. in $\Omega$.
\end{proof}

\begin{ex}
The condition \eqref{eq_Q_cond_2} is satisfied, for instance, if
\[
  |f(x)| \leq Q\, a(x)^{\theta}, \quad \text{for some } \theta \in (0,1).
\]
Indeed, in this case we have
\[
  |f(x)| \leq Q\, a(x)^{\theta} \leq Q \bigl( a(x) + 1 \bigr),
\]
so that \eqref{eq_Q_cond_2} holds with $R(x) \equiv Q$.   
\end{ex}

We can also establish the boundedness of the solution of \eqref{problem_2} under additional assumptions on $R(x)$ and $a(x)$.

\begin{prop}
    In addition to the assumptions of Theorem \ref{thm_existence_linear}, we suppose that $R(x)\in L^r(\Omega)$ for some $r>N_X/2$ and that there exists $\alpha>0$ such that
    \begin{equation}
        \label{eq_cond_a_2}
        a(x)\ge\alpha \quad \text{a.e. in }\Omega.
    \end{equation}
Then the solution $u$ obtained in Theorem~\ref{thm_existence_linear} is bounded, i.e. $u \in L^\infty (\Omega)$.
\end{prop}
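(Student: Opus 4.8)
The plan is to perform a Stampacchia-type truncation argument, carried out uniformly along the approximating sequence $(u_n)$ built in the proof of Theorem~\ref{thm_existence_linear}, and then to pass to the limit. Recall that each $u_n$ solves \eqref{prob_approx_2} with test functions in the whole space $\mathscr{H}_X(\Omega)$, that $u_n\to u$ a.e.\ in $\Omega$, and that by \eqref{eq_Q_cond_gen_approx} we have $|f_n|\le Q a_n+R$ with $a_n\ge 0$.

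First I would fix $k\ge Q$ and use $v=G_k(u_n)\in\mathscr{H}_X(\Omega)$ (note $\|G_k(u_n)\|_X\le\|u_n\|_X$, so this is an admissible test function) in \eqref{prob_approx_2}. Since $\nabla G_k(u_n)=\chi_{\{|u_n|>k\}}\nabla u_n$, the ellipticity estimate \eqref{eq_elliptic} bounds the principal part from below by $\frac1\Lambda\int_\Omega|XG_k(u_n)|^2$, while on $\{|u_n|>k\}$ one has $u_nG_k(u_n)=\bigl(|G_k(u_n)|+k\bigr)|G_k(u_n)|$. Using $|f_n|\le Q a_n+R$, the inequality $k\ge Q$ and $a_n\ge 0$ (the hypothesis $a\ge\alpha$ moreover leaves an extra nonnegative term $\alpha\int_\Omega|G_k(u_n)|^2$ on the left-hand side), all terms involving $a_n$ can be discarded, leaving
\[
\frac1\Lambda\int_\Omega|XG_k(u_n)|^2\le\int_{\{|u_n|>k\}}R(x)\,|G_k(u_n)|.
\]
Setting $A_{n,k}:=\{x\in\Omega:\ |u_n(x)|>k\}$, I would then apply Hölder's inequality with exponents $r$ and $r'=r/(r-1)$, Hölder again with exponents $2^*_X/r'$ and its conjugate — legitimate because $r>N_X/2$ forces $r'<2^*_X$ — and finally the Sobolev inequality \eqref{eq_sob_const}, to get
\[
\frac1\Lambda\|G_k(u_n)\|_X^2\le S^{-1/2}\|R\|_{L^r(\Omega)}\,\|G_k(u_n)\|_X\,|A_{n,k}|^{\frac1{r'}-\frac1{2^*_X}},
\]
hence $\|G_k(u_n)\|_X\le\Lambda S^{-1/2}\|R\|_{L^r(\Omega)}\,|A_{n,k}|^{\frac1{r'}-\frac1{2^*_X}}$.

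Next, for $h>k\ge Q$ one has $(h-k)^{2^*_X}|A_{n,h}|\le\int_\Omega|G_k(u_n)|^{2^*_X}\le S^{-2^*_X/2}\|G_k(u_n)\|_X^{2^*_X}$; combining this with the previous bound gives a recursion
\[
|A_{n,h}|\le\frac{C}{(h-k)^{2^*_X}}\,|A_{n,k}|^{\beta},\qquad \beta:=\frac{2^*_X}{r'}-1,
\]
with $C=C\bigl(\Lambda,S,2^*_X,\|R\|_{L^r(\Omega)}\bigr)$ \emph{independent of $n$}. The whole argument hinges on the fact that $\beta>1$ exactly when $r>\frac{2^*_X}{2^*_X-2}=\frac{N_X}{2}$, which is precisely the assumption made. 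Under this condition the classical Stampacchia lemma, applied to the nonincreasing map $k\mapsto|A_{n,k}|$ on $[Q,+\infty)$ together with $|A_{n,Q}|\le|\Omega|$, furnishes a threshold $k_0=Q+d_0$ — with $d_0$ depending only on $C$, $2^*_X$, $\beta$ and $|\Omega|$, hence not on $n$ — such that $|A_{n,k_0}|=0$, i.e.\ $\|u_n\|_{L^\infty(\Omega)}\le k_0$ for every $n$. Letting $n\to+\infty$ and using $u_n\to u$ a.e., I conclude $\|u\|_{L^\infty(\Omega)}\le k_0<+\infty$, which also yields an explicit bound.

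The substantive work is essentially bookkeeping: one must track the exponents through the two Hölder steps so that the power of $|A_{n,k}|$ in the recursion comes out as exactly $\beta>1$ under $r>N_X/2$, and one must check that every constant appearing in the recursion is uniform in $n$, so that the Stampacchia threshold $k_0$ does not blow up along the approximation. The test function poses no difficulty here, since \eqref{prob_approx_2} admits all $v\in\mathscr{H}_X(\Omega)$ and $G_k(u_n)\in\mathscr{H}_X(\Omega)$.
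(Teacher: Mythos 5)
Your proof is correct, but it takes a genuinely different and in fact shorter route than the paper. The paper argues in two stages: first it shows each $u_n$ is bounded (non-uniformly in $n$) by testing with $G_k(u_n)$, discarding the zero-order term and invoking a lemma of Boccardo--Croce with $f_n\in L^M(\Omega)$; only then, with $u_n\in L^\infty(\Omega)$ in hand, does it test with the exponential function $(e^{2|G_k(u_n)|}-1)\operatorname{sgn}(u_n)$, split $R$ at a level $t$, and use the lower bound \eqref{eq_cond_a_2} through $a_n\ge\frac{\alpha}{1+Q\alpha}$ to absorb the term $t\int_\Omega(e^{2|G_k(u_n)|}-1)$ before running Stampacchia's iteration. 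You instead exploit, already at the level of the test function $G_k(u_n)$ with $k\ge Q$, the absorption $\int_\Omega a_n u_n G_k(u_n)=\int_\Omega a_n|u_n|\,|G_k(u_n)|\ge Q\int_\Omega a_n|G_k(u_n)|$, which cancels the $Qa_n$ part of \eqref{eq_Q_cond_gen_approx} and leaves the clean inequality $\frac1\Lambda\|G_k(u_n)\|_X^2\le\int_\Omega R|G_k(u_n)|$; from there the double H\"older/Sobolev step and the exponent computation $\beta=\frac{2^*_X}{r'}-1>1\iff r>\frac{2^*_X}{2^*_X-2}=\frac{N_X}{2}$ are exactly right, the constants are visibly uniform in $n$, and Stampacchia's lemma closes the argument. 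What your approach buys is significant: it avoids the preliminary (non-uniform) $L^\infty$ bound and the exponential test function altogether, and it never uses the hypothesis $a\ge\alpha$, showing that assumption to be superfluous for this linear problem (it is the unboundedness of $g(s)=s$ that does the work). The only imprecision is your parenthetical claim that the discarded surplus equals $\alpha\int_\Omega|G_k(u_n)|^2$; it is actually $\int_\Omega a_n(|u_n|-Q)|G_k(u_n)|\ge\frac{\alpha}{1+Q\alpha}\int_\Omega|G_k(u_n)|^2$, but since you drop this term anyway the slip is immaterial.
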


\begin{proof}
      As a first step we prove that the solutions $u_n$ to the approximated problem \eqref{prob_approx_2} belong to $L^\infty (\Omega)$. 
      Using $G_k(u_n)$ as a test function in \eqref{prob_approx_2} and recalling \eqref{eq_elliptic}, we see that 
\begin{align}
\label{eq_est_b}
      \frac{1}{\Lambda}\int_\Omega |XG_k(u_n)|^2&\le \frac{1}{\Lambda}\int_\Omega |XG_k(u_n)|^2 + \int_\Omega a_n(x) u_n G_k(u_n)\\&\notag \le  \int_\Omega B(x) \nabla u_n \nabla G_k(u_n)  + \int_\Omega a_n(x) u_n G_k(u_n)\\&\notag\le    \int_\Omega |f_n(x)| |G_k(u_n)|,
\end{align}
where we have dropped the positive term $\int_\Omega a_n(x) u_n G_k(u_n)$ since $a\ge 0 $ and $u_n G_k(u_n) = |u_n||G_k(u_n)|$. By the Sobolev embedding \ref{sobolev}, it follows
\begin{equation*}
      \frac{S}{\Lambda} \|G_k(u_n)\|_{L^{2^*_X}(\Omega)}^2\le \int_\Omega |f_n(x)| |G_k(u_n)|,    
\end{equation*}
      where the constant $S$ was defined in \eqref{eq_sob_const}.
     To simplify notation, for $k>0$ we set
        \[
        A_k \coloneqq \{ x \in \Omega \mid |u_n(x)| > k \}
        \]
      and we fix $M>\frac{2N_X}{N_X+2}$. Applying H\"{o}lder's inequality twice we get
      \begin{align}
      \label{eq_est_b_2}
          \int_\Omega |f_n(x)| |G_k(u_n)| &=  \int_{A_k} |f_n(x)| |G_k(u_n)| \le \| G_k(u_n) \|_{L^{2^*_X}(\Omega)} \left(  \int_{A_k} |f_n(x)|^{\frac{2N_X}{N_X+2}}\right)^\frac{N_X+2}{2N_X}\\&\le \|f_n\|_{L^M(\Omega)} \|G_k(u_n)\|_{L^{2^*_X}(\Omega)} \left\lvert A_k \right\rvert^{\left[1-\frac{2N_X}{(N_X -2) M}\right]\frac{N_X+2}{2N_X}}.\notag
      \end{align}
      Estimates \eqref{eq_est_b}-\eqref{eq_est_b_2} give
      \begin{equation*}
      \frac{S}{\Lambda} \|G_k(u_n)\|_{L^{2^*_X}(\Omega)}^2\le \|f_n\|_{L^M(\Omega)} \|G_k(u_n)\|_{L^{2^*_X}(\Omega)} \left\lvert A_k \right\rvert^{\left[1-\frac{2N_N}{(N_X -2)M}\right]\frac{N_X+2}{2N_X}},          
      \end{equation*}
      that is 
      \begin{equation}
          \label{eq_est_b_3}
                \frac{S}{\Lambda} \|G_k(u_n)\|_{L^{2^*_X}(\Omega)}\le \|f_n\|_{L^M(\Omega)} \left\lvert A_k \right\rvert^{\left[1-\frac{2N_N}{(N_X -2)M}\right]\frac{N_X+2}{2N_X}},
      \end{equation}
      Applying again H\"{o}lder's inequality one has
      \begin{equation*}
      \int_\Omega |G_k(u_n)| \le \left(\int_\Omega |G_k(u_n)|^{2^*_X}\right)^\frac{1}{^{2^*_X}} \left\lvert A_k  \right\rvert^{\frac{N_X+2}{2N_X}}.          
      \end{equation*}
      So \eqref{eq_est_b_3} implies that
      \begin{equation*}
      \int_\Omega |G_k(u_n)|\le \|f_n\|_{L^M(\Omega)} |A_k|^{1+\frac{2}{N_X} - \frac{1}{M}}.        
      \end{equation*}
      Owing to the choice of $M$, we have that $1+\frac{2}{N_X} - \frac{1}{M}>1$ and so we can apply \cite[Lemma 6.2]{BoccardoCroce}, to obtain that 
      \begin{equation*}
      \| u_n\|_{{L^\infty}(\Omega)}\le C\|f_n\|_{L^M(\Omega)},  
      \end{equation*}
    where $C$ depends on $\Omega$ and $N_X$.

    \medskip
    Now we prove that the sequence $(u_n)$ is bounded in $L^\infty(\Omega)$. For a given number $k>Q$ we let 
    \begin{equation*}
      v_n \coloneqq (e^{2|G_k(u_n)|}-1)\operatorname{sgn}(u_n). 
    \end{equation*}
    An easy computation shows that $\nabla v_n = 2 e^{2 |G_k(u_n)|}\nabla G_k(u_n)$ and so, using $v_n$ as a test function in \eqref{prob_approx_2}, we get
    \begin{equation*}
      2\int_\Omega B(x) \nabla u_n e^{2 |G_k(u_n)|} \nabla G_k(u_n) + \int_\Omega a_n(x) |u_n| (e^{2|G_k(u_n)|}-1) \le \int_\Omega |f_n(x)| (e^{2|G_k(u_n)|}-1).  
    \end{equation*}
Therefore, by \eqref{eq_elliptic} and \eqref{eq_Q_cond_gen_approx}, 
    \begin{align}
\notag
        \frac{2}{\Lambda}\int_\Omega | XG_k(u_n)|^2 e^{2 |G_k(u_n)|} &+ \int_\Omega a_n(x) \left(e^{2 |G_k(u_n)|}-1 \right) [|u_n|-Q] \le \int_\Omega R(x) \left(e^{2 |G_k(u_n)|}-1 \right)
            \label{eq_est_b_4}
        \\ &\le t \int_{R(x)\le t} \left(e^{2 |G_k(u_n)|}-1 \right) + \int_{R(x)>t} R(x) \left(e^{2 |G_k(u_n)|}-1 \right)
        \\ & \le t \int_\Omega \left(e^{2 |G_k(u_n)|}-1 \right) + \int_{R(x)>t} R(x) \left(e^{2 |G_k(u_n)|}-1 \right)
        \notag
    \end{align}
    for all $t>0$. 
Now, note that 
\begin{equation*}
  \left|X G_k(u_n)\right|^2 e^{2 |G_k(u_n)|} = \left|X (e^{ |G_k(u_n)|})\right|^2 = \left|X (e^{ |G_k(u_n)|}-1)\right|^2  
\end{equation*}
and  the Sobolev embedding \ref{sobolev} yields
\begin{equation}
    \label{eq_est_b_5}
    \int_\Omega |X G_k(u_n)|^2 e^{2 |G_k(u_n)|} \ge S \left[ \int_\Omega \left( e^{|G_k(u_n)|}-1  \right)^{2^*_X} \right]^{\frac{2}{2^*_X}}.
\end{equation}
Now, by \eqref{eq_cond_a_2} we get that for all $n\in\N$ and all $x \in \Omega$
\begin{equation}
    \label{eq_est_b_6}
    a_n(x) \ge \frac{a(x)}{1+Q a(x)}\ge \frac{\alpha}{1+Q\alpha},
\end{equation}
since the function $s\mapsto s/(Q+s)$ is increasing.

Plugging \eqref{eq_est_b_5} and \eqref{eq_est_b_6} into \eqref{eq_est_b_4} we obtain
\begin{align*}
    S \frac{2}{\Lambda} \left[ \int_\Omega \left( e^{|G_k(u_n)|}-1  \right)^{2^*_X} \right]^{\frac{2}{2^*_X}} &+ \int_\Omega \left( e^{2|G_k(u_n)|}-1 \right) \left[\frac{\alpha}{1+Q\alpha}\left(|u_n-Q|\right)-t\right]\\
    &\quad\le \int_{R(x)>t} R(x) \left( e^{2|G_k(u_n)|}-1 \right).
\end{align*}
The elementary inequality
\begin{displaymath}
    e^{2|x|}-1 \leq 2 \left( e^{|x|}-1 \right)^2 +1
\end{displaymath}
valid for each $x \in \mathbb{R}$ shows that we can estimate
\begin{align*}
    \int_{R(x)>t} R(x) \left( e^{2|G_k(u_n)|}-1 \right) &= \int_{\{R(x)>t\} \cap \{ |u_n|\ge k \}} R(x) \left( e^{2|G_k(u_n)|}-1 \right)\\&\notag\le 2 \int_{\{R(x)>t\} \cap \{ |u_n|\ge k \}}  R(x) \left[ e^{|G_k(u_n)|}-1 \right]^2 + \int_{\{R(x)>t\} \cap \{ |u_n|\ge k \}} R(x) \\&\le 2\int_{R(x)>t} R(x) \left[ e^{|G_k(u_n)|}-1 \right]^2 + \int_{|u_n|\ge k} R(x),\notag
\end{align*}
to get
\begin{align*}
     S &\frac{2}{\Lambda} \left[ \int_\Omega \left( e^{|G_k(u_n)|}-1  \right)^{2^*_X} \right]^{\frac{2}{2^*_X}} + \int_\Omega \left( e^{2|G_k(u_n)|}-1 \right) \left[\frac{\alpha}{1+Q\alpha}\left(|u_n-Q|\right)-t\right] \\&\le 2 \int_{R(x)>t} R(x) \left( e^{|G_k(u_n)|}-1 \right)^2 + \int_{|u_n|\ge k} R(x)
     \\& \le 2 \left[ \int_{R(x)>t} R(x)^{\frac{N_X}{2}}\right]^{\frac{2}{N_X}}\left[ \int_\Omega \left( e^{|G_k(u_n)|}-1\right)^{2^*_X} \right]^{\frac{2}{2^*_X}} + \int_{|u_n|\ge k} R(x),
\end{align*}
by H\"{o}lder's inequality, i.e.
\begin{multline}
    \label{eq_est_b_7}
    \left(  \frac{2S}{\Lambda}- 2 \left[ \int_{R>t} R^{\frac{N_X}{2}}\right]^{\frac{2}{N_X}}  \right)  \left[ \int_\Omega \left( e^{|G_k(u_n)|}-1  \right)^{2^*_X} \right]^{\frac{2}{2^*_X}} \\
    \quad {} + \int_\Omega \left( e^{2|G_k(u_n)|}-1 \right) \left[\frac{\alpha}{1+Q\alpha}\left(|u_n-Q|\right)-t\right] \le \int_{|u_n|\ge k} R.
\end{multline}
If we now choose $t$ so large that
\begin{equation*}
\frac{2S}{\Lambda}- 2 \left[ \int_{R>t} R^{\frac{N_X}{2}}\right]^{\frac{2}{N_X}}  \ge \frac{S}{\Lambda}>0    
\end{equation*}
and 
$k> Q + \frac{t(1+Q\alpha)}{\alpha}$, then
\begin{equation*}
\int_\Omega \left( e^{2|G_k(u_n)|}-1 \right) \left[\frac{\alpha}{1+Q\alpha}\left(|u_n-Q|\right)-t\right] \ge 0.    
\end{equation*}
This implies by \eqref{eq_est_b_7} that
\begin{equation*}
C \left[\int_{|u_n|\ge k} \left( e^{(|u_n|-k)}-1 \right)^{2^*_X}\right]^{\frac{2}{2^*_X}} \le \int_{|u_n|\ge k} R    
\end{equation*}
for a suitable constant $C>0$ not depending on $u_n$, which implies, by H\"{o}lder's inequality
\begin{equation*}
C \left[\int_{|u_n|\ge k} \left( e^{(|u_n|-k)}-1 \right)^{2^*_X}\right]^{\frac{2}{2^*_X}} \le \|R\|_{L^r(\Omega)} |\{|u_n|\ge k \}|^{\frac{1}{r'}},    
\end{equation*}
with $\frac{1}{r} + \frac{1}{r'}=1$.

Let $h>k$ so that we deduce
\begin{equation*}
(h-k)^2 \left\lvert \{ |u_n|\ge h \}  \right\rvert^{\frac{2}{2^*_X}} = \left[ \int_{|u_n|\ge h} (h-k)^{2^*_X}  \right]^{\frac{2}{2^*_X}} \le \tilde{C}  |\{|u_n|\ge k \}|^{\frac{1}{r'}},    
\end{equation*}
that is, setting $\varphi(s)\coloneqq |\{ |u_n|\ge s \}  |$,
\begin{equation*}
\varphi(h) \le \frac{\hat C}{(h-k)^{2^*_X}} \varphi(h)^{\frac{1}{r'}\cdot \frac{2^*_X}{2}}.    
\end{equation*}
The choice of $r>N_X/2$ yields $\frac{1}{r'}\cdot \frac{2^*_X}{2}>1$, and we can apply Stampacchia's lemma, \cite[Lemma 4.1, p.19]{Stampacchia} to deduce that there exists $d>0$ such that $\varphi(d) = 0$, that is
\begin{equation*}
\|u_n\|_{L^\infty(\Omega)} \le d,    
\end{equation*}
for all $n\in \N$.

We can now conclude the proof since, as proved in Theorem \ref{thm_existence_linear}, the sequence $(u_n)$ converges in $\mathscr{H}_X(\Omega)$ and a.e. to a function $u\in\mathscr{H}_X(\Omega)$, which is a solution of \eqref{problem_2}.
\end{proof}

\section*{Acknowledgments}
\noindent
This work has been funded by the European Union - NextGenerationEU
within the framework of PNRR Mission 4 - Component 2 - Investment 1.1
under the Italian Ministry of University and Research (MUR) program
PRIN 2022 - grant number 2022BCFHN2 - Advanced theoretical aspects in
PDEs and their applications - CUP: H53D23001960006 and partially
supported by the INdAM-GNAMPA Research Project 2024: Aspetti
geometrici e analitici di alcuni problemi locali e non-locali in
mancanza di compattezza - CUP: E53C23001670001.

\bibliographystyle{plain}
\bibliography{refs}
   
\end{document}